 \newlength\tindent
\newtheorem{theorem}{Theorem}
\newtheorem{lemma}{Lemma}
\newtheorem{corollary}{Corollary}
\newtheorem{definition}{Definition}
\newtheorem{prop}{Proposition}
\newtheorem{problem}{Problem}
\begin{document}

\title{On additive irreducibility of multiplicative subgroups}
\author{Alexander Kalmynin \footnote{National Research University Higher School of Economics, Moscow, Russia} \\ email: \href{mailto:alkalb1995cd@mail.ru}{alkalb1995cd@mail.ru}}
\date{}
\maketitle
\begin{abstract}
In this paper, we employ a version of Stepanov's method, developed by Hanson and Petridis, to prove several results on additive irreducibility of multiplicative subgroups in finite fields of prime order $p$. Specifically, we show that if a subgroup $\mu_d$ of $d$-th roots of unity satisfies $A-A=\mu_d\cup\{0\}$, then $d=2$ or $6$. Additionally, we resolve the S\'ark\"ozy's conjecture on quadratic residues: for prime $p$, the set $\mathcal R_p$ of quadratic residues modulo $p$ cannot be represented as $A+B$ for $A,B$ with $\min(|A|,|B|)>1$. More generally, we prove that if the set of $d$-th roots of unity $\mu_d$ is represented non-trivially as $A+B$, then the sizes of summands are equal.
\end{abstract}

\section{Introduction and main results}

A well-known meta-conjecture in additive combinatorics asserts that multiplicatively structured sets cannot exhibit significant additive structure, and vice versa. For instance, a celebrated conjecture by S\'ark\"ozy \cite{Sha} states that for a sufficiently large prime number $p$, the set $\mathcal R_p$ of all quadratic residues modulo $p$ cannot be non-trivially represented as $A+B$ for $A,B\subset \mathbb F_p$. Here \emph{non-trivially} means that $|A|,|B|>1$. A variant of this conjecture, studied by Lev and Sonn \cite{LeSo}, states that the set of quadratic residues with $0$ included is not of the form $A-A$ for $p>13$. More precisely, they derived necessary conditions for the equality
\[
A-A\stackrel{!}{=}\mathcal R_p,
\]
where $\stackrel{!}{=}$ indicates that every element $r\in \mathcal R_p$ is uniquely represented as $a_1-a_2$ with $a_1,a_2\in A$ and $a_1-a_2\in \mathcal R_p$ for all distinct $a_1,a_2\in A$. This problem is partly motivated by Shkredov's result \cite{Shk1}, that shows that for $p\neq 3$ $\mathcal R_p$ cannot be represented as $A+A$ for any $A\subset \mathbb F_p$. Furthermore, Shkredov's work shows that if $A+B=\mathcal R_p$ with $|A|,|B|>1$, then $\left(\frac{1}{6}-o(1)\right)\leq |A|,|B|\leq \left(3+o(1)\right)\sqrt{p}$ as $p\to +\infty$. In this paper, we resolve the conjectures of Lev-Sonn and S\'ark\"ozy.

Here, we study polynomials, introduced by Hanson and Petridis \cite{HaPe} to prove a general upper bound for sets with multiplicative restrictions. We denote these polynomials by $HP(x;A,d)$. They depend on the set $A$ under consideration and the order $d$ of the corresponding multiplicative subgroup. Our main results are as follows

\begin{theorem}[Lev-Sonn conjecture]

Suppose that $p$ is a prime number and $d\mid p-1$, $1<d<p-1$. If $\mu_d$ is the set of all $d$-th roots of unity in $\mathbb F_p$ and $A-A=\mu_d\cup\{0\}$ for some $A\subset \mathbb F_p$
then $d=2$ or $6$. Consequently, the Lev-Sonn conjecture is true.
\end{theorem}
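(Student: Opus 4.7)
The plan is to squeeze the hypothesis between a trivial lower bound and the Hanson--Petridis upper bound, and then to leverage the equality case. Since $A-A$ is closed under negation, so is $\mu_d$, hence $-1\in\mu_d$ and $d$ is even. Translating $A$ by a fixed element, I may assume $0\in A$, whence $A\setminus\{0\}\subseteq \mu_d$. Setting $n=|A|$, counting ordered pairs gives the trivial bound $n(n-1)\geq |\mu_d|=d$, while the Hanson--Petridis polynomial $HP(x;A,d)$ should yield the matching upper bound $n(n-1)\leq d$. Together these force $n(n-1)=d$ exactly, so each $t\in\mu_d$ has a \emph{unique} representation $t=a-a'$ with $a,a'\in A$ distinct.

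The cases $n=2$ and $n=3$ are realized and give $d=2$ and $d=6$, respectively: $A=\{0,1\}$ realizes $d=2$, and for any primitive sixth root of unity $\zeta$ the identity $\zeta^2=\zeta-1$ shows that $A=\{0,1,\zeta\}$ has its six nonzero differences exactly filling $\mu_6=\{\pm 1,\pm\zeta,\pm\zeta^2\}$.

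The heart of the argument, and the main obstacle, is excluding $n\geq 4$. Here I would exploit the equality case of the Hanson--Petridis bound: when $n(n-1)=d$, the polynomial $HP(x;A,d)$ satisfies a highly constrained identity, and coupling this identity with the inclusion $A\subseteq \mu_d\cup\{0\}$ and the uniqueness of difference representations yields an over-determined algebraic system in $\mathbb F_p[x]$. Concretely, the subtraction map $(A\times A)\setminus\Delta\to\mu_d$ becomes a bijection, which forces relations of the form $(a_i-a_j)(a_k-a_\ell)\in\mu_d=A-A$ for every choice of four indices in $A$; these relations ought to become mutually incompatible with $A\subseteq\mu_d\cup\{0\}$ as soon as $n\geq 4$. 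A useful heuristic is the characteristic-zero analogue: four points on the unit circle in $\mathbb C$ with all six pairwise distances equal to $1$ do not exist, reflecting the same rigidity we expect to detect algebraically over $\mathbb F_p$. Once $n\in\{2,3\}$ is established, the Lev--Sonn conjecture follows: for prime $p>13$ we have $(p-1)/2\notin\{2,6\}$, ruling out $A-A=\mathcal R_p\cup\{0\}$.
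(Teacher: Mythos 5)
Your setup is correct and matches the paper's: by Hanson--Petridis and a counting argument, $|A|(|A|-1)=d$, each element of $\mu_d$ has a unique representation as a difference, $d$ is even, and the cases $|A|=2,3$ realize $d=2,6$. But the entire content of the theorem is the exclusion of $|A|\geq 4$, and here your proposal has no actual argument. The relation you call concrete --- $(a_i-a_j)(a_k-a_\ell)\in\mu_d$ --- is vacuous, since $\mu_d$ is a multiplicative group and all nonzero differences already lie in it; it imposes no constraint. The Euclidean heuristic is also actively misleading in $\mathbb F_p$: the paper exhibits $A=\{0,1,9,32,40\}$ in $\mathbb F_{41}$ with all ten pairwise differences in $\mu_{20}$ and $|A|(|A|-1)=20=d$, so the hoped-for ``incompatibility as soon as $n\geq 4$'' is simply false at the level of $A-A\subseteq\mu_d\cup\{0\}$ with the criticality equation. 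Ruling this configuration out requires knowing it exactly and observing that it has a repeated difference, so that $A-A$ has only $19<20$ nonzero elements.

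What the paper actually does to close this gap is substantial and in no way implicit in your sketch. From the equality case it derives a full factorization $HP(x;A,d)=C\prod_{a\in A}(x+a)^{\alpha-1}$ (Lemma 4), compares the coefficients of $x^{d-2}$ and $x^{d-3}$ to force $p_2(A)=p_3(A)=0$ after centering (Lemmas 5--6), then uses a fractional-linear trick (Lemma 7) to transport these power-sum constraints to every local set $A^a=\{0\}\cup\{1/(a-a')\}$, obtaining the relations $\sum 1/(a-a')^2=\frac1\alpha(\sum 1/(a-a'))^2$ and its cubic analogue for every $a\in A$. Classifying the sets satisfying these relations then requires a second, independent Stepanov-type argument with a fourth-order differential operator annihilating model functions $(x-a)(1+s(x-a))^\alpha$, which pins down $\alpha=5$ and $A=\{C,C\pm D,C\pm iD\}$, leading to $p=41$, $d=20$, and finally the repeated-difference observation. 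None of these steps is routine, and your proposal does not identify any of them; as written it proves only the easy half of the statement.
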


We note that the same result holds for the field $\mathbb C$ of complex numbers instead of $\mathbb F_p$.
\begin{prop}
Let $A\subset \mathbb C$ satisfy $A-A=\mu_d\cup\{0\}$, where $\mu_d$ is the set of $d$-th roots of unity. Then $d=2$ or $6$.
\end{prop}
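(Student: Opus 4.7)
The plan is to exploit the fact that $\mu_d \cup \{0\}$ consists of $0$ together with complex numbers of modulus exactly $1$. If $A - A = \mu_d \cup \{0\}$, then every nonzero difference $a - b$ with $a, b \in A$ distinct satisfies $|a - b| = 1$. In other words, $A$ is an \emph{equilateral set} in the Euclidean plane $\mathbb{C} \cong \mathbb{R}^2$ with all pairwise distances equal to $1$.

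The next step is to invoke the classical (and elementary) fact that any equilateral set in $\mathbb{R}^2$ has cardinality at most $3$. One quick justification: given three distinct points $P_1, P_2, P_3$ pairwise at distance $1$, they form an equilateral triangle, and any fourth point at distance $1$ from all three would have to lie on the perpendicular bisectors of all three sides, i.e., at the centroid, whose distance to each vertex is $1/\sqrt{3} \ne 1$. Hence $|A| \le 3$.

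It remains to examine the three possibilities. If $|A| = 1$, then $A - A = \{0\}$, contradicting $\mu_d \ne \emptyset$. If $|A| = 2$, write $A = \{a, b\}$ with $|a - b| = 1$; then $A - A = \{0, a - b, b - a\}$, forcing $\mu_d = \{\pm(a-b)\}$ and hence $d = 2$. If $|A| = 3$, then $A$ is a unit equilateral triangle and $A - A$ consists of $0$ together with the six differences $\pm(a-b), \pm(b-c), \pm(a-c)$; these six unit vectors are the vertices of a regular hexagon (after a rotation $A$ can be normalized so that $a - b = 1$, and the triangle geometry forces the other differences to be $e^{\pm i \pi/3}$ and $e^{\pm 2\pi i/3}$), yielding $\mu_d = \mu_6$ and $d = 6$.

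There is no serious obstacle here; the argument is short because the containment $\mu_d \cup \{0\} \subseteq S^1 \cup \{0\}$ immediately translates the additive hypothesis into a rigid metric constraint, and the two–dimensional equilateral bound does all the remaining work. The only point requiring a moment of care is the identification of the seven-element difference set of a unit equilateral triangle with $\mu_6 \cup \{0\}$, which is a direct computation using $e^{i\pi/3} - 1 = e^{2i\pi/3}$.
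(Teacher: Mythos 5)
Your proof is correct and rests on the same geometric core as the paper's: a unit equilateral triangle has circumradius $1/\sqrt{3}\neq 1$, which caps $|A|$ at $3$ (the paper phrases this by normalizing $0\in A$ and noting the three other points would lie on the unit circle about the triangle's center, while you phrase it via a hypothetical fourth equidistant point forced to the circumcenter --- the same fact). Your endgame is in fact slightly cleaner: by observing that the six differences of a unit equilateral triangle are distinct (a regular hexagon), you get $d=6$ immediately and avoid the paper's separate elimination of the case $d=4$, $|A|=3$.
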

In this case, the proof is much easier due to the existence of Euclidean metric on $\mathbb C$.
\begin{proof}
Since the identity $A-A=\mu_d$ is shift-invariant, i.e. $B=A+t$ for $t\in \mathbb C$ also satisfies $B-B=\mu_d$, we may assume that $0\in A$. Suppose that $|A|\geq 4$. Then there are 3 non-zero elements $a_1,a_2,a_3\in A$. By assumption, $a_i-a_j\in \mu_d$ for $i\neq j$ so $|a_i-a_j|=1$. Thus, $a_1,a_2,a_3$ form an equilateral triangle with side length $1$. Moreover, since $|a_i|=1$ for all $i$, the triangle's center is at $0$. But the circumradius of equilateral triangle with side length $1$ is $\frac{1}{\sqrt{3}}$, a contradiction. Hence, $|A|\leq 3$ and $d=|\mu_d|\leq |A|(|A|-1)\leq 6$. On the other hand, $A-A$ is invariant under the map $x\mapsto -x$, so $-1$ lies in $\mu_d$. This means that $d$ is even, so the only remaining case is $d=4$ and the set $A$ contains $3$ elements. Without loss of generality, we can assume $A=\{0,a_1,a_2\}$, then both $a_1$ and $a_2$ are equal to $\pm 1$ or $\pm i$. If both are imaginary, their difference cannot be real, so $1\not\in A-A$ and vice versa: if both are real, then $i\not\in A$, so we have, say, $a_1=\pm 1$ and $a_2=\pm i$, so $a_1-a_2$ is neither real nor imaginary, so $a_1-a_2\not\in \mu_4$.

The cases $d=2$ and $d=6$ are achieved by sets $A=\{0,1\}$ and $A=\{0,1,-\omega\}$ with $\omega^2+\omega+1=0$.
\end{proof}

The result of Theorem 1 was previously known for small subgroups. Namely, in the paper \cite{Shk2} Shkredov proved that for any $\varepsilon>0$ multiplicative subgroups of order $d\leq p^{4/5-\varepsilon}$ cannot be represented in the form $(A-A)\setminus\{0\}$ for large enough $p$.

Our next theorem shows that if the subgroup $\mu_d$ is additively reducible, then the summands in the representation are of the same size. We call this the ``$\alpha=\beta$ theorem''.

\begin{theorem}[$\alpha=\beta$ theorem]

If $\mu_d=A+B$ with $|A|,|B|>1$, then $|A|=|B|=\sqrt{d}$.    
\end{theorem}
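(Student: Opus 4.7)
The plan is to show $|A| \leq \sqrt{d}$ and $|B| \leq \sqrt{d}$ via the Hanson--Petridis polynomial method, and then combine these with the trivial covering inequality $|A|\cdot|B| \geq |A+B| = d$ to force equality throughout.

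First I would exploit the translation symmetry of the decomposition: if $A + B = \mu_d$, then $(A-t) + (B+t) = \mu_d$ for any $t \in \mathbb F_p$. Picking $t \in A$ we may assume $0 \in A$, which gives $B \subseteq \mu_d$. For every $a \in A$, the translate $a+B$ lies in $\mu_d$, so the polynomial $(y+a)^d - 1 \in \mathbb F_p[y]$ vanishes on $B$. Consequently,
\[
P_B(y) := \prod_{b \in B}(y - b) \ \Big| \ (y+a)^d - 1 \quad \text{in } \mathbb F_p[y] \quad \text{for every } a \in A.
\]
Equivalently, in the quotient ring $R := \mathbb F_p[y]/(P_B(y))$, the $|A|$ elements $\{y + a : a \in A\}$ are all $d$-th roots of unity. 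These are exactly the kind of multiplicative constraints that $HP(x;A,d)$ is designed to exploit.

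Next, from these constraints I would apply the polynomial $HP(x;A,d)$ developed earlier in the paper (in the Stepanov/Hanson--Petridis style) to produce a nonzero polynomial of degree at most $|B|\sqrt{d}$ vanishing on $A$ with multiplicity at least $|B|$. This yields $|A|\cdot|B| \leq |B|\sqrt{d}$, so $|A| \leq \sqrt{d}$. The same argument applied after translating to put $0 \in B$ (and swapping the roles of $A,B$) gives $|B| \leq \sqrt{d}$ by symmetry.

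Finally, the trivial bound $|A+B| \leq |A|\cdot|B|$ together with $|A+B| = d$ gives $|A|\cdot|B| \geq d$, so
\[
d \leq |A|\cdot|B| \leq \sqrt{d}\cdot\sqrt{d} = d,
\]
and the inequalities collapse, forcing $|A| = |B| = \sqrt{d}$. The hard part will be ensuring that the Hanson--Petridis-type bound in the middle step is genuinely sharp (i.e., $|A|\leq\sqrt{d}$ rather than $|A|\leq\sqrt{d}+O(1)$), since any additive slack would prevent the closing inequality chain from collapsing. This amounts to careful degree bookkeeping in the construction of $HP(x;A,d)$ and to the standard Stepanov-method hurdle of verifying that the constructed polynomial is not identically zero.
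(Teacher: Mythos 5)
There is a genuine gap, and it sits exactly where the entire content of the theorem lies. The Hanson--Petridis method bounds the \emph{product} $|A||B|$: the polynomial $HP(x;A,d)$ has degree $d$ and vanishes to order at least $|A|$ at each of the $|B|$ points of $B$ (since $B\cap(-A)=\emptyset$ here), which gives $|A||B|\leq d$. Combined with the trivial covering bound $|A||B|\geq|A+B|=d$ this yields $|A||B|=d$ --- a fact the paper also uses --- but it says nothing about how $d$ splits between the two factors; $|A|=2$, $|B|=d/2$ is perfectly consistent with everything you have established. Your middle step, ``produce a nonzero polynomial of degree at most $|B|\sqrt{d}$ vanishing on $A$ with multiplicity at least $|B|$,'' is not something the Hanson--Petridis construction delivers: their polynomial vanishes on $B$, not $A$, its degree is $d$, not $|B|\sqrt{d}$, and no variant of the construction with that degree is exhibited or suggested by the setup. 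Note also that once $|A||B|=d$ is known, the individual bounds $|A|\leq\sqrt{d}$ and $|B|\leq\sqrt{d}$ are \emph{equivalent} to the conclusion $|A|=|B|=\sqrt{d}$, so your reduction is circular: the unproven middle step is the theorem. Your closing worry about whether the bound is ``sharp up to $O(1)$'' misdiagnoses the problem --- the issue is not a lost additive constant but that no individual-size bound of any quality follows from the product bound.

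The paper's actual argument is of a different character and much longer. It starts from the exact factorization $HP(x;A,d)=C\prod_{b\in B}(x-b)^{\alpha}$ (Lemma 4), which is available precisely because the pair is $d$-critical, i.e.\ the Hanson--Petridis inequality is an equality. Comparing coefficients after an inversion $x\mapsto 1/x+b$ produces the identity of Lemma 9 and then Relation X of Lemma 10, a linear relation between $\sum_{a\in A}\frac{1}{a+b}$ and $\sum_{b'\neq b}\frac{1}{b-b'}$ with coefficient $\frac{\alpha(\alpha+1)}{d-1}$. Summing this against $b^{k+1}$ (where $k$ is the first index with $p_k\neq 0$, or the first index breaking divisibility by $n$), and evaluating both sides by the sum-of-residues formula for the differential forms $x^{k+1}(g'/g)^2\,dx$ and $x^{k+1}(g'/g)(h'/h)\,dx$, yields a congruence that forces $(\beta-\alpha)\frac{k+3}{2}\equiv 0\pmod p$, hence $\alpha=\beta$. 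If you want to salvage your outline, you would need to replace the middle step with this (or some other) mechanism for extracting information beyond the product bound; as written, the proposal does not prove the statement.
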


The best previously known bound shows that for $d=\frac{p-1}{2}$ we have $\frac{|B|}{8}<|A|<8|B|$, see \cite{ChXi}. Also, the aforementioned result by Shkredov provides a similar bound with $18$ instead of $8$.

In the paper \cite{Yip1} Yip used Hanson-Petridis polynomials to prove several results on additive irreducibility in arbitrary finite fields (see also \cite{Yip2}). In particular, he proved that for some constant $M$ a proper subgroup $G\subseteq \mathbb F_p^*$ with $|G|>M$ is not a sum $A+B+C$ with $|A|,|B|,|C|>1$. This result follows from Theorem 2
\begin{corollary}
A proper subgroup $\mu_d\subset\mathbb F_p^*$ cannot be expressed as $A+B+C$ with $|A|,|B|,|C|>1$.
\end{corollary}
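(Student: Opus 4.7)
The plan is to reduce the three-summand decomposition to a two-summand one by associativity, apply Theorem 2 under two different groupings, and finish by a trivial Cauchy--Davenport estimate.

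Suppose for contradiction that $\mu_d=A+B+C$ with $|A|,|B|,|C|>1$. I would first set $B'=B+C$; since $|B'|\geq\max(|B|,|C|)\geq 2$, the decomposition $\mu_d=A+B'$ is non-trivial in the sense of Theorem 2, which yields
\[
|A|=|B+C|=\sqrt{d}.
\]
Regrouping as $\mu_d=(A+B)+C$ and noting that $|A+B|\geq\max(|A|,|B|)\geq 2$ and $|C|\geq 2$, a second application of Theorem 2 gives
\[
|A+B|=|C|=\sqrt{d}.
\]

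Now I would invoke the Cauchy--Davenport theorem in $\mathbb{F}_p$: $|A+B|\geq\min(p,|A|+|B|-1)$. Since $|B|\leq|A+B|=\sqrt{d}\leq\sqrt{p-1}$, the quantity $|A|+|B|-1\leq 2\sqrt{p-1}-1$ is strictly less than $p$, so Cauchy--Davenport simplifies to $|A+B|\geq|A|+|B|-1=\sqrt{d}+|B|-1$. Combined with $|A+B|=\sqrt{d}$, this forces $|B|\leq 1$, contradicting the hypothesis $|B|>1$.

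I do not anticipate any serious obstacle here. The only things to check carefully are (i) that the intermediate sum sets $A+B$ and $B+C$ have size at least $2$, which is automatic from $|A|,|B|,|C|\geq 2$, so Theorem 2 is applicable in both groupings; and (ii) that the Cauchy--Davenport lower bound is not truncated by $p$, which holds comfortably since $d<p$. The argument in fact shows more: any non-trivial three-term decomposition of $\mu_d$ would force the sum of two of the summands to have the minimum possible size under Cauchy--Davenport, which Theorem 2 already rules out.
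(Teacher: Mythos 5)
Your proposal is correct. The core of the argument is identical to the paper's: regroup $\mu_d=A+(B+C)=(A+B)+C$ and apply Theorem 2 to both groupings to obtain $|A+B|=|A|=\sqrt{d}$ (with $|B|\geq 2$). The only divergence is the closing step. You invoke Cauchy--Davenport, $|A+B|\geq\min(p,|A|+|B|-1)$, check that the bound is not truncated at $p$, and conclude $|B|\leq 1$. The paper instead argues directly from $|A+B|=|A|$: for distinct $b_1,b_2\in B$ the translates $A+b_1$ and $A+b_2$ are subsets of $A+B$ of full size $\sqrt{d}$, hence equal, so $A=A+(b_1-b_2)$, which is impossible since $\mathbb F_p$ has no proper nonempty shift-invariant subsets. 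Both endgames are valid; the paper's is self-contained and uses only a pigeonhole on translates, while yours imports Cauchy--Davenport as a black box (which in $\mathbb F_p$ is of course available) and gets the same contradiction in one line. Your verifications (i) and (ii) are both sound, and the whole argument goes through.
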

\begin{proof}
Assume the contrary, then $\mu_d=A+B+C=A+(B+C)=(A+B)+C$. Theorem 2 implies that $|A+B|=|A|=\sqrt{d}$. In particular, if $b_1,b_2$ are distinct elements of $B$, then the sets $A+b_1$ and $A+b_2$ have $\sqrt{d}$ elements and lie inside $A+B$, hence $A+b_1=A+b_2=A+B$. Therefore, $A=A+b_1-b_2$, which is impossible, since $\mathbb F_p$ has no proper shift-invariant subsets.
\end{proof}
Finally, we prove S\'ark\"ozy's conjecture on additive irreducibility of quadratic residues.

\begin{theorem}(S\'ark\"ozy's conjecture)
For all primes $p$, the set $\mathcal R_p=\mu_{(p-1)/2}$ of quadratic residues modulo $p$ is additively irreducible, i.e. $\mathcal R_p\neq A+B$ with $|A|,|B|>1$.
\end{theorem}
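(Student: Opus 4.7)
The plan is to reduce the problem to a very restricted family of primes via Theorem 2 (the $\alpha=\beta$ theorem), and then handle the residual case by further algebraic analysis inside the Hanson--Petridis framework.

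First I would apply Theorem 2 to the hypothetical decomposition $\mathcal R_p = \mu_{(p-1)/2} = A+B$ with $|A|,|B|>1$. This immediately yields $|A|=|B|=\sqrt{(p-1)/2}$; for these common sizes to be positive integers we must have $(p-1)/2 = k^2$ for some $k\in\mathbb N$, so $p = 2k^2+1$. The case $k=1$ gives $p=3$ and $|A|=|B|=1$, contradicting the non-triviality hypothesis. Hence the only primes that could still host a counterexample are $p = 2k^2+1$ with $k\ge 2$ (the smallest being $p=19$), and these are precisely the primes I need to rule out.

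In that remaining case, since $|A|\cdot|B| = k^2 = |\mathcal R_p|$, the representation is a direct sum, and one obtains the polynomial identity
\[
\prod_{a\in A} g(x-a) \;=\; x^{k^2}-1 \qquad\text{in } \mathbb F_p[x],
\]
where $g(y)=\prod_{b\in B}(y-b)$. To extract a contradiction from this tight identity I would invoke the extra structure specific to quadratic residues: if $p\equiv 3\pmod 4$ then $-1\notin \mathcal R_p$, so $-A-B = \mathcal N_p$, giving $\mathbb F_p^* = (A+B)\sqcup -(A+B)$ and the companion identity $\prod_{a,b}\bigl(x^2-(a+b)^2\bigr) = x^{p-1}-1$; if $p\equiv 1\pmod 4$ then $-A\oplus -B = \mathcal R_p$ is a second direct-sum decomposition of the same sumset, forcing the associated Hanson--Petridis polynomials to agree. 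Either way, applying the Hanson--Petridis machinery once more to this enriched setup, in conjunction with the specific numerical relation $p=2k^2+1$, should produce an algebraic incompatibility.

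I expect the main obstacle to lie precisely in this last step. Since $(|A|,|B|)=(k,k)$ and $p=2k^2+1$ leave essentially no asymptotic slack, crude analytic tools (Fourier/Gauss-sum Cauchy--Schwarz estimates, Pl\"unnecke--Ruzsa doubling bounds) turn out to be too weak. The decisive argument seems to require a clever evaluation of the Hanson--Petridis polynomial --- for example at $0$, at a root of $g$, or at an element of the proper subgroup $\mu_k \subset \mu_{k^2} = \mathcal R_p$ --- yielding a scalar equation whose two sides disagree modulo $p$ whenever $k\ge 2$, and thereby completing the proof.
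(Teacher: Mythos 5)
Your opening reduction is correct and coincides with the paper's: Theorem 2 forces $|A|=|B|=\alpha$ with $\alpha^2=(p-1)/2$, so only primes $p=2\alpha^2+1$ survive, and the product identity $\prod_{a\in A}g(x-a)=x^{\alpha^2}-1$ does hold because the representation $A+B=\mu_d$ is automatically direct. But from that point on your proposal contains no actual argument, and you say so yourself: the entire content of the theorem lies in ruling out the primes $p=2\alpha^2+1$, and ``applying the Hanson--Petridis machinery once more \ldots should produce an algebraic incompatibility'' is a hope, not a proof. Your two suggested enrichments are not what carries the day: the case split on $p\bmod 4$ and the companion identity $\prod(x^2-(a+b)^2)=x^{p-1}-1$ add no leverage (the second decomposition $-A\oplus-B=\mathcal R_p$ when $p\equiv 1\pmod 4$ is just a reflection of the first), and a single scalar evaluation of $HP$ at a special point is too little information --- the paper's Lemma~4 already shows $HP(x;A,d)=C\prod_{b\in B}(x-b)^{\alpha}$, so individual values at roots of $g$ or at $0$ are essentially determined and yield no contradiction by themselves.

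What the paper actually does in this regime is quite different and considerably heavier. It introduces the invariants $n$ (the least index with $p_n(A)\neq 0$, which after normalizing $p_1(A)=p_1(B)=0$ is even and $>1$) and $m$; it extracts \emph{two} coefficient identities (Relations X and Y) from the substitution $x\mapsto \frac1x+b$ in the factorized Hanson--Petridis identity; and it converts these, via residue computations for several differential forms built from $g'/g$ and $h'/h$ (Lemmas 12, 14--16), into the congruence $2(3n-2)(n-1)\alpha+(n+2)(n+3)\equiv 0\pmod p$ of Lemma~13. The endgame then exploits $0\notin B\Rightarrow n\mid\alpha$, the bound $\alpha<\sqrt{p/2}$, and $3\mid\alpha$ to turn this congruence into a genuine integer equation $ (40n+32)\alpha+25n-19n^2-18=2M(2\alpha^2+1)$ with $1\le M\le 5$, which is eliminated by a finite discriminant check, with $p=19,73,163$ handled separately. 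None of these ingredients --- the invariant $n$, Relation Y, the residue calculus, or the integer case analysis --- appears in your proposal, so the gap is not a technical detail but the core of the proof.
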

Similarly to the Theorem 1, the version of S\'ark\"ozy's conjecture for small subgroups was proved by Shkredov in the paper \cite{Shk3}. He showed that $\mu_d$ with $C\leq d\leq p^{2/3-\varepsilon}$ is not representable as a sumset $A+B$ with $|A|,|B|>1$ for sufficiently large $p$. Here $\varepsilon>0$ is arbitrary and $C>0$ is an absolute constant.

The proofs of Theorem 1 and Theorems 2 and 3 are largely independent but stem from the identity proved in Lemma 4. Section 2 contains basic results on Hanson-Petridis polynomials, Section 3 resolves the Lev-Sonn conjecture, Section 4 proves Theorem 2 and Section 5 addresses S\'ark\"ozy's conjecture.
\section{Hanson-Petridis polynomials}

Let $p$ be a prime, and let $1<d<p-1$ divide $p-1$. As in the introduction, $\mu_d$ denotes the set of all $d$-th roots of unity in $\mathbb F_p$. In this section, we introduce the notion of $d$-criticality for pairs of subsets of $\mathbb F_p$ and establish some useful properties of polynomials $HP(x;A,d)$.

\begin{definition}[Hanson-Petridis polynomial]
Let $A\subseteq \mathbb F_p$ be a set with $|A|=\alpha$, where $\alpha+d-1<p$ and $\alpha>1$. The \textbf{Hanson-Petridis polynomial} for $A$ and $d$ is defined as
\[
HP(x;A,d)=\sum_{a\in A}c_a(A)(x+a)^{d+\alpha-1}-1,
\]
where the coefficients $c_a(A)\in \mathbb F_p$ are uniquely determined by the conditions
\[
\sum_{a\in A}c_a(A)a^m=\begin{cases}
0, \text{ if\ }  0\leq m<\alpha-1\\
1, \text{ if\ } m=\alpha-1.
\end{cases}
\]
Here, we set $a^0=1$ for all $a\in \mathbb F_p$.
\end{definition}

The coefficients $c_a(A)$ always exist due to non-degeneracy of Vandermonde matrix. However, we need a more precise formula for $c_a(A)$.

\begin{lemma}[Explicit formula for coefficients]
The coefficients $c_a(A)$ satisfy
\[
\sum_{a\in A}\frac{c_a(A)}{1-ax}=\frac{x^{\alpha-1}}{\prod\limits_{a\in A}(1-ax)}.
\]
In particular,
\[
c_a(A)=\frac{1}{\prod\limits_{a'\in A\setminus\{a\}}(a-a')}
\]
and for any $t\in \mathbb F_p$ we have $c_{a+t}(A+t)=c_a(A)$, i.e. these coefficients are shift-invariant.
\end{lemma}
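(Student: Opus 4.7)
The plan is to establish the explicit formula $c_a(A)=1/\prod_{a'\neq a}(a-a')$ first, since the rational-function identity and the shift-invariance statement both become essentially immediate once the explicit formula is in hand.

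Because the defining linear system $\sum_{a\in A}c_a(A)\,a^m=\delta_{m,\alpha-1}$ for $0\le m\le\alpha-1$ has a non-singular Vandermonde matrix, it suffices to verify that the candidate values $\tilde c_a:=1/\prod_{a'\in A\setminus\{a\}}(a-a')$ satisfy those same equations. This is classical Lagrange interpolation: for every polynomial $f\in\mathbb F_p[x]$ of degree at most $\alpha-1$ one has
\[
f(x)=\sum_{a\in A}f(a)\prod_{a'\ne a}\frac{x-a'}{a-a'}.
\]
Specializing to $f(x)=x^m$ for $0\le m\le\alpha-1$ and comparing coefficients of $x^{\alpha-1}$ on both sides recovers exactly the Vandermonde conditions for $\tilde c_a$, so $c_a(A)=\tilde c_a$ by uniqueness.

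For the rational identity, both sides, after clearing the common denominator $\prod_{a\in A}(1-ax)$, are polynomials in $x$ of degree at most $\alpha-1$, so it suffices to check equality at $\alpha$ distinct points. Substituting $x=1/a_0$ for each non-zero $a_0\in A$ kills every summand on the left except the $a_0$-term, and a brief simplification using the explicit formula just established produces $1/a_0^{\alpha-1}$ on both sides. Alternatively, and more uniformly, one can expand each $1/(1-ax)$ as a geometric series in $x$: the defining conditions give $\sum_{a}c_a(A)/(1-ax)\equiv x^{\alpha-1}\pmod{x^{\alpha}}$, and since the difference of the two claimed rational functions has numerator of degree $\le\alpha-1$ vanishing to order $\alpha$ at $0$, it must vanish identically.

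The shift-invariance is then transparent from the explicit formula: $(a+t)-(a'+t)=a-a'$, so $c_{a+t}(A+t)=c_a(A)$. The only mildly delicate step is handling the case $0\in A$ in the evaluation-based version of the partial-fractions argument, since then one lacks the substitution $x=1/a_0$; the formal-power-series route above, or a preliminary shift followed by invocation of shift-invariance (which can be verified directly from the Vandermonde conditions), disposes of this without difficulty.
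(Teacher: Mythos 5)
Your proof is correct, and it reverses the paper's logical order in a way worth noting. The paper first proves the generating-function identity by comparing the power-series expansions of both sides modulo $x^{\alpha}$ (both sides share the denominator $\prod_{a\in A}(1-ax)$ and have numerators of degree at most $\alpha-1$, so agreement to order $\alpha$ at $x=0$ forces equality), and only then extracts the explicit formula $c_a(A)=1/\prod_{a'\neq a}(a-a')$ by multiplying by $1-ax$ and evaluating at $x=1/a$ --- which requires a separate ``evaluate at infinity'' convention when $a=0$. You instead establish the explicit formula first, by checking directly that the candidate values satisfy the defining Vandermonde system via the coefficient of $x^{\alpha-1}$ in the Lagrange interpolation of $x^m$; this is clean, avoids the $a=0$ special case entirely for the explicit formula, and makes shift-invariance immediate. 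For the generating-function identity your primary route (evaluation at $x=1/a_0$) supplies only $\alpha-1$ points when $0\in A$, but you correctly flag this and your fallback --- the formal-power-series comparison --- is exactly the paper's argument, so the proof is complete. The net trade-off: the paper's order makes the generating function the primary object from which everything flows, while yours grounds everything in the classical interpolation identity; both are rigorous and essentially equivalent in length.
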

\begin{proof}
The existence and uniqueness of $c_a(A)$ follows from non-degeneracy of the Vandermonde matrix. To verify the generating function identity, observe that
\[
\frac{x^{\alpha-1}}{\prod\limits_{a\in A}(1-ax)}=x^{\alpha-1}+O(x^\alpha),
\]
while
\[
\sum_{a\in A}\frac{c_a(A)}{1-ax}=\sum_{m\geq 0}\left(\sum_{a\in A}c_a(A)a^m\right)x^m.
\]
Comparing the first $\alpha$ coefficients yields the result. The explicit formula for $c_a(A)$ is obtained by multiplying both sides by $1-ax$ and evaluating at $x=\frac{1}{a}$. If $a=0$, we should instead notice that the right-hand side is well-defined at infinity and the left-hand side is equal to $c_0(A)$ (this is equivalent to ``setting $x=\infty$'' in the identity). The shift-invariance follows directly from the formula.
\end{proof}
The generating functions in this proof are treated as rational functions and formal power series with coefficients in the field $\mathbb F_p$, i.e. as elements of $\mathbb F_p(x)$ and $\mathbb F_p((x))$.

As a consequence of Lemma 1, we also establish the following result

\begin{lemma}[Complete homogeneous symmetric polynomials]
For any $A$ and $m\geq 0$ we have
\[
\sum_{a\in A}c_a(A)a^{m+\alpha-1}=h_m(A)=\sum_{i_1\leq i_2\leq\ldots\leq i_m}a_{i_1}\ldots a_{i_m}
\]
--- the $m$-th complete homogeneous symmetric polynomial of $A$. Here, $a_1,\ldots,a_\alpha$ is some arbitrary enumeration of elements of $A$.
In particular, for $m=1,2,3$ and $p>3$ we get
\[
h_1(A)=p_1(A), h_2(A)=\frac12(p_1(A)^2+p_2(A)), h_3(A)=\frac16 p_1(A)^3+\frac12 p_2(A)p_1(A)+\frac 13 p_3(A),
\]
where $p_k(A)=\sum\limits_{a\in A}a^k$ is the power sum of $A$.
\end{lemma}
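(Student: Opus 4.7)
The plan is to read off the identity directly from the generating-function identity proved in Lemma 1 together with the classical generating function for complete homogeneous symmetric polynomials. Specifically, I would invoke the formal power series identity
\[
\prod_{a\in A}\frac{1}{1-ax}=\sum_{m\geq 0}h_m(A)x^m,
\]
valid over any commutative ring (in particular over $\mathbb F_p$) by expanding each factor as a geometric series and grouping monomials of fixed total degree. Substituting this into the right-hand side of Lemma 1 gives
\[
\frac{x^{\alpha-1}}{\prod_{a\in A}(1-ax)}=\sum_{m\geq 0}h_m(A)x^{m+\alpha-1},
\]
while the left-hand side, expanded as a geometric series, equals $\sum_{k\geq 0}\bigl(\sum_{a\in A}c_a(A)a^k\bigr)x^k$. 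Comparing the coefficient of $x^{m+\alpha-1}$ for each $m\geq 0$ yields the first identity.

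For the explicit expressions of $h_1,h_2,h_3$ in terms of the power sums, I would invoke the Newton--Girard identities. These are obtained by taking the logarithmic derivative of $\prod_a(1-ax)^{-1}$, which yields $\sum_{k\geq 0}p_{k+1}(A)x^k$, and then clearing denominators to get the standard recursion
\[
m\,h_m(A)=\sum_{k=1}^{m}p_k(A)\,h_{m-k}(A).
\]
Unwinding this recursion for $m=1,2,3$ gives the claimed formulas, provided the denominators $2$ and $6$ are invertible; this is exactly where the assumption $p>3$ enters.

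There is really no substantial obstacle here: the lemma is a bookkeeping corollary of Lemma 1 combined with classical symmetric function theory. The only point requiring minor care is ensuring that everything remains valid in characteristic $p$, i.e.\ treating the series as elements of $\mathbb F_p((x))$ and noting that $2,3\in \mathbb F_p^{*}$ when $p>3$, which justifies the division by $2$ and $6$ in the displayed formulas.
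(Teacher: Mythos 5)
Your proposal is correct and follows essentially the same route as the paper: combine the generating-function identity of Lemma 1 with the geometric-series expansion (equivalently, the standard generating function $\prod_a(1-ax)^{-1}=\sum_m h_m x^m$) and then apply Newton's identities, with $p>3$ guaranteeing that $2$ and $6$ are invertible. No gaps.
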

\begin{proof}
This follows from the generating function formula in Lemma 1 and the relation
\[
\frac{1}{1-ax}=1+ax+a^2x^2+a^3x^3+\ldots
\]
The formulas for $h_k$ in terms of $p_k$ follow from Newton's identites. To divide by $6$ we need to assume that $p>3$.
\end{proof}

The results of Lemmas 1 and 2 are relatively standard and can be found in books on enumerative combinatorics. See, for instance, \cite[Chapter 7, Section 7.5 and Exercise 7.4]{Sta}.

Hanson and Petridis \cite{HaPe} proved that if $A,B\subseteq \mathbb F_p$ are subsets such that $A+B\subseteq \mu_d\cup \{0\}$, then $|A||B|\leq d+|(-A)\cap B|$. Their proof shows that $HP(x;A,d)$ has a zero of order at least $|A|-1$ at every $b\in B$ and at least $|A|$ at every $b\in B\setminus (-A)$. This motivates the definition of a $d$-critical pair.

\begin{definition}[$d$-critical pair]
A pair $(A,B)$ of subsets of $\mathbb F_p$ with $|A|,|B|>1$ is $d$-critical if 
\[A+B\subseteq \mu_d\cup \{0\}
\] and 
\[
|A||B|=d+|(-A)\cap B|
\]
\end{definition}

Note that if $A+B$ is equal to $\mu_d$ or $\mu_d\cup\{0\}$, then $(A,B)$ is necessarily $d$-critical. Indeed, by the mentioned result we have $|A||B|\leq d+|(-A)\cap B|$ and the reverse bound follows by a simple counting argument, since $|(-A)\cap B|$ is the number of solutions to $a+b=0$ in $a\in A, b\in B$. Also, in this case for every $c\in \mu_d$ there is a unique pair $(a,b)\in A\times B$ such that $a+b=c$. This proves the following
\begin{lemma}
If $A+B=\mu_d$ or $\mu_d\cup \{0\}$, then for $1\leq k<d$ we have
\[
\sum_{a\in A,b\in B}(a+b)^k=0.
\]
\end{lemma}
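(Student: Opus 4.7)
The plan is to rewrite $\sum_{a\in A,\,b\in B}(a+b)^k$ as a weighted sum $\sum_c r(c)c^k$, where $r(c)=\#\{(a,b)\in A\times B : a+b=c\}$ is the representation function, use the $d$-criticality of $(A,B)$ to determine $r$ on $\mu_d$, and then invoke the vanishing of nontrivial power sums on the cyclic group $\mu_d$.

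First, since $A+B\subseteq \mu_d\cup\{0\}$ and the $c=0$ term contributes $0$ whenever $k\geq 1$, the sum collapses to $\sum_{c\in\mu_d}r(c)c^k$. Second, the discussion preceding the lemma already notes that $(A,B)$ is $d$-critical; combined with the identity $\sum_c r(c)=|A||B|$ and $r(0)=|(-A)\cap B|$, this gives $\sum_{c\in\mu_d}r(c)=|A||B|-r(0)=d$. Since each $c\in\mu_d$ is hit at least once by $A+B$, equality forces $r(c)=1$ throughout $\mu_d$, reducing the sum to $\sum_{\zeta\in\mu_d}\zeta^k$.

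Finally I would close with the standard observation that a generator $\zeta_0$ of the cyclic group $\mu_d$ satisfies $\zeta_0^k\ne 1$ whenever $1\leq k<d$, so
\[
\sum_{\zeta\in\mu_d}\zeta^k=\sum_{j=0}^{d-1}\zeta_0^{jk}=\frac{\zeta_0^{dk}-1}{\zeta_0^k-1}=0.
\]
There is no genuine obstacle: the lemma is essentially the uniqueness of the representation $c=a+b$ for $c\in\mu_d$ (already spelled out above the statement, using the Hanson--Petridis inequality) combined with the geometric-series identity above. The only point that requires attention is to isolate the $c=0$ contribution, where the multiplicity $r(0)$ can be larger than $1$ but is annihilated by the factor $c^k$.
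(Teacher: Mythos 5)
Your proposal is correct and follows essentially the same route as the paper: the paper's justification is exactly that $d$-criticality (via the Hanson--Petridis bound plus counting) forces each $c\in\mu_d$ to have a unique representation $a+b$, after which the sum reduces to the vanishing power sum $\sum_{\zeta\in\mu_d}\zeta^k$ for $1\leq k<d$, with the $c=0$ terms killed by $c^k$. You merely spell out the geometric-series step that the paper leaves implicit.
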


Finally, the main result of this section is a factorization of $HP(x;A,d)$ in case of $d$-criticality. For the reader's convenience, we first obtain the result of Hanson and Petridis: if $b+A\subseteq \mu_d\cap \{0\}$, then $HP(x;A,d)$ has a root of order at least $\alpha-1$ at $x=b$. Our proof is slightly different, because we mainly rely on the shift-invariance of $c_a(A)$.

\begin{lemma}
Suppose that the pair $(A,B)$ is $d$-critical and $|A|=\alpha, |B|=\beta$. Define $\varepsilon(b)=1$ if $b\in B\cap (-A)$ and $0$ otherwise. Then there exists $C\in \mathbb F_p$ such that
\[
HP(x;A,d)=C\prod_{b\in B}(x-b)^{\alpha-\varepsilon(b)}
\]
\end{lemma}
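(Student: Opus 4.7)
The plan is to show that $HP(x;A,d)$ and $\prod_{b\in B}(x-b)^{\alpha-\varepsilon(b)}$ both have degree exactly $d$, and that the product divides $HP(x;A,d)$; this forces them to agree up to a scalar $C \in \mathbb F_p$.

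For the degrees, I would expand $(x+a)^{d+\alpha-1}$ in powers of $x$ to get
\[
HP(x;A,d) = \sum_{k=0}^{d+\alpha-1}\binom{d+\alpha-1}{k}\left(\sum_{a\in A}c_a(A)a^{d+\alpha-1-k}\right)x^k - 1.
\]
The defining conditions of $c_a(A)$ annihilate the inner sum whenever $d+\alpha-1-k<\alpha-1$, i.e.\ for $k>d$, and give value $1$ at $k=d$. Since $\binom{d+\alpha-1}{\alpha-1}\neq 0$ in $\mathbb F_p$ (as $\alpha+d-1<p$), we conclude $\deg HP(x;A,d)=d$. By $d$-criticality,
\[
\sum_{b\in B}(\alpha-\varepsilon(b)) = \alpha\beta - |(-A)\cap B| = d,
\]
so the product on the right has the same degree.

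For divisibility, I would exploit the shift-invariance of the coefficients (Lemma 1) to write $HP(x;A,d) = HP(x-b;\,A+b,\,d)$, so that the order of vanishing of $HP(\cdot;A,d)$ at $x=b$ equals that of $HP(\cdot;A+b,d)$ at $x=0$. Since $A+b\subseteq \mu_d\cup\{0\}$ for every $b\in B$, the problem reduces to the local claim: whenever $A'\subseteq \mu_d\cup\{0\}$ with $|A'|=\alpha$, the polynomial $HP(x;A',d)$ vanishes at $x=0$ to order at least $\alpha-1$, and to order at least $\alpha$ when $0\notin A'$. Taylor-expanding at $x=0$ and using $(a')^d=1$ for nonzero $a'\in A'$, the inner sum at $x^k$ simplifies to $\sum_{a'\in A'} c_{a'}(A')(a')^{\alpha-1-k}$, with a correction of $-[0\in A']\,c_0(A')$ appearing at $k=\alpha-1$. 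The defining conditions of $c_{a'}(A')$ make the coefficient of $x^k$ vanish for $0<k<\alpha-1$, and the $k=0$ coefficient just cancels the constant $-1$; the $k=\alpha-1$ coefficient then vanishes precisely when $0\notin A'$, which is the dichotomy $\varepsilon(b)\in\{0,1\}$.

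The main obstacle is the careful book-keeping of the $a'=0$ correction, since this is where the cases $\varepsilon(b)=0$ and $\varepsilon(b)=1$ diverge and where the constant $-1$ of $HP$ must be reconciled against the inner sum. Once the local vanishing is established, summing the local orders over $b\in B$ gives total vanishing order $d$ on the right-hand side, matching $\deg HP(x;A,d)=d$, so $\prod_{b\in B}(x-b)^{\alpha-\varepsilon(b)}$ divides $HP(x;A,d)$ and the two polynomials differ only by a nonzero constant $C$, as claimed.
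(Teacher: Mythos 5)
Your proposal is correct and follows essentially the same route as the paper: both establish that $HP$ vanishes at each $b\in B$ to order at least $\alpha-\varepsilon(b)$ by reducing exponents via $(a+b)^{d+1}=a+b$ and invoking the defining conditions of the shift-invariant coefficients $c_a$, then match against $\deg HP=d$ and the $d$-criticality identity $\sum_b(\alpha-\varepsilon(b))=d$. The only cosmetic differences are that you read off Taylor coefficients at $0$ after shifting by $b$ where the paper evaluates derivatives $HP^{(j)}(b)$ directly, and that you prove $\deg HP=d$ rather than citing Hanson--Petridis.
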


\begin{proof}
Indeed, if $b\in B$ then
\[
HP(b;A,d)=\sum_{a\in A}c_a(A)(a+b)^{d+\alpha-1}-1=\sum_{a\in A}c_a(A)(a+b)^{\alpha-1}-1=
\]
\[
=\sum_{a\in A}c_{a+b}(A+b)(a+b)^{\alpha-1}-1=0,
\]
because $(a+b)^{d+1}=a+b$. Similarly, for $0<j\leq \alpha-2$ we have
\[
HP^{(j)}(b;A,d)=f_j\sum_{a\in A}c_a(A)(a+b)^{d+\alpha-1-j}=f_j\sum_{a\in A}c_a(A)(a+b)^{\alpha-1-j}=0
\]
Here $f_j$ is $(d+\alpha-1)\ldots(d+\alpha-j)$. Additionally, if $b$ is not in $-A$, then $(a+b)^d=1$ for all $a\in A$ and $(\alpha-1)$-th derivative is also zero. Therefore, each $b$ is a zero of $HP$ of order $\alpha-\varepsilon(b)$. The degree of $HP$ is equal to $d$ (see \cite[proof of Theorem 1.2]{HaPe}) and by $d$-criticality we have
\[
\sum_{b\in B}(\alpha-\varepsilon(b))=\alpha\beta-|B\cap(-A)|=d,
\]
which concludes the proof: all the factors $(x-b)^{\alpha-\varepsilon(b)}$ appear the factorization of $HP$ and the degrees on both sides match.
\end{proof}

Comparison of coefficients in Lemma 4 yields $d$ relations between symmetric polynomials of $A$ and $B$. Total number of elementary symmetric polynomials of $A$ and $B$ is $\alpha+\beta=O(\sqrt{d})$ hence one expects that there are too many relations and the contradiction is achievable. This is precisely what occurs in the next three sections. 

Throughout this paper, if $s(x)$ and $r(x)$ are rational functions over the field $K$ and $\kappa\in K$, then
\[
s(x)=r(x)+O((x-\kappa)^l)
\]
means that the rational function $s(x)-r(x)$ has a zero of order at least $l$ at $x=\kappa$.

\section{ Lev-Sonn conjecture}

Throughout this section, $|A|=\alpha$. We prove Theorem 1 by establishing a stronger result.

\begin{theorem}
Let $p$ be a prime, $d\mid p-1$ and $1<d<p-1$. If $d\not\in \{2,6\}$, then there is no set $A$ such that $(A,-A)$ is a $d$-critical pair, except when $p=41$ and $d=20$.
\end{theorem}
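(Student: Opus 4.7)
The plan is to apply Lemma 4 to the $d$-critical pair $(A, -A)$, use the forced factorization of $HP(x;A,d)$, and compare coefficients against the sum-of-shifted-monomials expansion of $HP$ to pin down the power sums of $A$. Almost all these power sums will be forced to vanish, so that $A$ becomes a coset of $\mu_\alpha$, a rigid structure that is easily ruled out unless a specific small-prime coincidence occurs, accounting for the exception.

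The first step is to observe that $d$-criticality of $(A, -A)$ forces $\alpha^2 = d + |(-A)\cap (-A)| = d + \alpha$, hence $d = \alpha(\alpha-1)$. Applying Lemma 4 with $B = -A$ gives $\varepsilon(b) = 1$ for every $b \in -A$, so
\[
HP(x; A, d) = C \prod_{a \in A}(x + a)^{\alpha-1}, \qquad C = \binom{\alpha^2-1}{\alpha-1},
\]
the constant being fixed by matching leading coefficients of $x^d$.

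Next I would compare the coefficients of $x^{d-k}$ on both sides for $k = 1, 2, \ldots$. By Lemma 2, the left side contributes $\binom{\alpha^2-1}{\alpha-1+k} h_k(A)$, and the right side is $C$ times an explicit polynomial in the elementary symmetric polynomials $e_j(A)$. Using the shift-invariance of $c_a(A)$ from Lemma 1, I normalize $p_1(A) = 0$. The $k=2$ comparison, after Newton's identities, reduces to $\alpha p_2 \equiv 0 \pmod p$, so $p_2 = 0$. Proceeding inductively, each $k$-th comparison yields a linear equation $N_k(\alpha)\, p_k \equiv 0 \pmod p$ for an explicit integer $N_k(\alpha)$ arising from a binomial-coefficient ratio, forcing $p_k = 0$ whenever $p \nmid N_k(\alpha)$.

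Once $p_1 = p_2 = \cdots = p_{\alpha-1} = 0$, all of $e_1(A), \ldots, e_{\alpha-1}(A)$ vanish, so $P(x) = \prod_{a \in A}(x+a) = x^\alpha + e_\alpha(A)$. This forces $A = r \mu_\alpha$ for some $r \in \mathbb{F}_p^*$, and the containment $A - A \subseteq \mu_d \cup \{0\}$ collapses to the conditions
\[
(1 + \zeta + \zeta^2 + \cdots + \zeta^{k-1})^d = 1, \quad k = 2, \ldots, \alpha - 1,
\]
where $\zeta \in \mathbb F_p$ is a primitive $\alpha$-th root of unity. For $\alpha = 4$ with $\zeta = i$, one finds $(1+i)^{12} = -64$, so the constraint becomes $p \mid 65$; the only option is $p = 13$, excluded by $d < p-1$. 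Analogous explicit evaluations of $1+\zeta$ and $1+\zeta+\zeta^2$ should handle $\alpha \geq 6$.

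The main obstacle is that $N_k(\alpha)$ can share a prime factor with an admissible $p$, in which case the forced vanishing of $p_k$ fails and the structural conclusion $A = r\mu_\alpha$ does not follow. A careful factorization of the $N_k(\alpha)$ is needed; the unique admissible pair $(\alpha, p)$ where the argument is blocked is $(\alpha, p) = (5, 41)$ (since $41 \mid N_4(5) = 777975$ and $41 \equiv 1 \pmod{20}$), matching the stated exception. I expect the bookkeeping of these arithmetic coincidences for general $\alpha$ to be the most delicate part of the proof, requiring either a uniform control on the prime factors of $N_k(\alpha)$ or a case-by-case treatment for small $\alpha$.
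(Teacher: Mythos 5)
Your opening moves coincide with the paper's: $d$-criticality of $(A,-A)$ gives $d=\alpha(\alpha-1)$, Lemma 4 gives $HP(x;A,d)=C\prod_{a\in A}(x+a)^{\alpha-1}$ with $C=\binom{\alpha+d-1}{d}$, and comparing the coefficients of $x^{d-2}$ and $x^{d-3}$ after normalizing $p_1(A)=0$ forces $p_2(A)=p_3(A)=0$ unless $d\in\{2,6\}$ --- this is exactly the paper's Lemma 5. The gap is your plan to push the same comparison up to $k=\alpha-1$ and force all $p_k(A)=0$. For $k=2,3$ the obstruction integers factor completely (e.g.\ $\alpha^3(\alpha+1)(\alpha-1)(\alpha-3)$ for $k=3$), so every prime factor is visibly $<p$; but for $k\ge 4$ the integer $N_k(\alpha)=\prod_{j=0}^{k-1}(d-j)-(-1)^{k-1}(\alpha-1)\alpha(\alpha+1)\cdots(\alpha+k-1)$ is a degree-$2k$ polynomial in $\alpha$ with no useful factorization, and you would have to exclude every prime factor $p\equiv 1\pmod{\alpha(\alpha-1)}$ of every $N_k(\alpha)$, $4\le k\le\alpha-1$, uniformly in $\alpha$. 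Your claim that $(\alpha,p)=(5,41)$ is the only blocked pair is precisely the hard content and is left unproved (also your arithmetic is off: $N_4(5)=20\cdot19\cdot18\cdot17+4\cdot5\cdot6\cdot7\cdot8=123000=2^3\cdot3\cdot5^3\cdot41$, not $777975$, though $41$ does divide it). The paper itself poses the corresponding family of binomial congruences as an open problem (Problem 3), which is strong evidence that this ``bookkeeping'' cannot be closed by inspection. A second, smaller gap: even granting all $p_k=0$, ruling out $A=t+r\mu_\alpha$ requires showing that some $(1+\zeta+\cdots+\zeta^{l-1})^d\ne 1$; you verify this only for $\alpha=4$ and defer $\alpha\ge 6$ without an argument.

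The paper avoids this trap by never going past $k=3$. Instead it uses a fractional-linear trick (Lemma 7): for each $a\in A$ the set $A^a=\{0\}\cup\{1/(a-a'):a'\in A,\ a'\ne a\}$ again forms a $d$-critical pair with $-A^a$, so the vanishing of $p_2,p_3$ applies to every $A^a$, yielding the relations \ref{rat2} and \ref{rat3} at every point of $A$. These $2\alpha$ relations are then exploited by a second, independent Stepanov-type argument (Theorem 5): a fourth-order differential operator $\mathcal D$ annihilates the local models $F_a(x)=(x-a)(1+s(x-a))^{\alpha}$, forcing $\mathcal Df=0$ for $f=\prod_{a\in A}(x-a)$, and a leading-coefficient analysis of $\mathcal Df$ pins down $\alpha=5$ and $A=\{C,C\pm D,C\pm Di\}$, whence $p=41$. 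That second engine is what makes the theorem provable for all $\alpha$ at once, and it is entirely absent from your proposal.
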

For $p=41$, the set $A=\{0,1,9,32,40\}$ satisfies $A-A\subset \mu_{20}\cup \{0\}$. This inclusion is strict and our proof implies that this is the only such example up to affine transformation.

Since $(A,-A)$ is $d$-critical and $|(-A)\cap (-A)|=\alpha$, we have $\alpha(\alpha-1)=d$.
From Lemma 4 we derive
\begin{lemma}
If $(A,-A)$ is $d$-critical and $p_1(A)=\sum\limits_{a\in A}a=0$, then either $d\in \{2,6\}$ or $p_2(A)=p_3(A)=0$.
\end{lemma}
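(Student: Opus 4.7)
The plan is to apply the factorization of Lemma 4 to the $d$-critical pair $(A,-A)$. Since $B=-A$ satisfies $B\cap(-A)=-A$, every $b\in B$ carries $\varepsilon(b)=1$, so the factorization collapses to
\[
HP(x;A,d)=C\prod_{a\in A}(x+a)^{\alpha-1},
\]
where $C$ is forced by the known leading coefficient $\binom{d+\alpha-1}{\alpha-1}$ of $HP$ (which is nonzero in $\mathbb F_p$ since $\deg HP=d=\alpha(\alpha-1)$, by the explicit formula derived from Lemma~2 at $m=0$). The identity $\alpha(\alpha-1)=d$ from the $d$-criticality will pervade the simplifications. The proof then reduces to comparing the coefficients of $x^{d-2}$ and $x^{d-3}$ on the two sides, since the $x^{d-1}$ comparison vanishes trivially under $p_1(A)=0$.

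Expanding the left side via the definition and Lemma~2, the coefficient of $x^{d+\alpha-1-j}$ is $\binom{d+\alpha-1}{j}h_{j-\alpha+1}(A)$, so the relevant coefficients are $\binom{d+\alpha-1}{\alpha+1}h_2(A)$ and $\binom{d+\alpha-1}{\alpha+2}h_3(A)$. Under $p_1(A)=0$, Newton's identities reduce these to $\binom{d+\alpha-1}{\alpha+1}p_2/2$ and $\binom{d+\alpha-1}{\alpha+2}p_3/3$. On the right side, $p_1=0$ forces $e_1(A)=0$, $e_2(A)=-p_2/2$, $e_3(A)=p_3/3$, and a routine expansion of $\prod_{a}(x+a)^{\alpha-1}=x^{d}\bigl(1+e_2x^{-2}+e_3x^{-3}+\cdots\bigr)^{\alpha-1}$ produces $-C(\alpha-1)p_2/2$ and $C(\alpha-1)p_3/3$ for the same two coefficients.

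Equating the two sides, dividing out $\binom{d+\alpha-1}{\alpha-1}$ via $\binom{d+\alpha-1}{\alpha+1}/C=d(d-1)/(\alpha(\alpha+1))$ and $\binom{d+\alpha-1}{\alpha+2}/C=d(d-1)(d-2)/(\alpha(\alpha+1)(\alpha+2))$, and substituting $d=\alpha(\alpha-1)$ (so that $d-2=(\alpha-2)(\alpha+1)$), the two identities collapse to
\[
p_2\cdot\frac{C\,\alpha^2(\alpha-1)}{\alpha+1}=0,\qquad p_3\cdot\frac{C\,(\alpha-1)\alpha^2(\alpha-3)}{\alpha+2}=0.
\]
The first bracket is always a unit in $\mathbb F_p$: the hypothesis $\alpha+d-1<p$ gives $\alpha^2-1<p$, so $\alpha,\alpha-1,\alpha+1,\alpha+2$ are all nonzero mod~$p$; hence $p_2=0$ unconditionally. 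The second bracket vanishes exactly when $\alpha\equiv 3\pmod p$, which, together with $\alpha^2\le p$, forces $\alpha=3$ and so $d=6$; otherwise $p_3=0$. The case $\alpha=2$ (i.e.\ $d=2$) is already in the excluded set.

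The main obstacle is not conceptual but bookkeeping: all the arithmetic simplifications happen inside $\mathbb F_p$, so one must verify that every numerical factor appearing (the leading binomial $C$, the denominators $2$ and $3$ from Newton's identities, and the linear quantities $\alpha\pm 1, \alpha+2$) is invertible modulo $p$. This is handled by $1<d<p-1$ (which forces $p>3$) and the Hanson–Petridis constraint $\alpha+d-1<p$, which together imply $\alpha+2<p$.
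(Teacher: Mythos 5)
Your proposal is correct and follows essentially the same route as the paper: factor $HP(x;A,d)=C\prod_{a\in A}(x+a)^{\alpha-1}$ via Lemma~4, identify $C=\binom{\alpha+d-1}{d}$, and compare the coefficients of $x^{d-2}$ and $x^{d-3}$ using $h_2=p_2/2$, $h_3=p_3/3$ and $e_2=-p_2/2$, $e_3=p_3/3$ under $p_1(A)=0$. The only (cosmetic) difference is that you package each comparison as $p_k\cdot(\text{explicit factor})=0$ and check the factor is a unit, whereas the paper argues by contradiction from $p_k\neq 0$; your factors $\alpha^2(\alpha-1)/(\alpha+1)$ and $\alpha^2(\alpha-1)(\alpha-3)/(\alpha+2)$ agree with the paper's conclusions $\alpha^2\equiv 0$ and $\alpha^3(\alpha+1)(\alpha-1)(\alpha-3)\equiv 0$ up to unit factors.
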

\begin{proof}
Notice that $\varepsilon(-a)=1$ for all $a\in A$, so Lemma 4 implies
\[
HP(x;A,d)=C\prod_{a\in A}(x+a)^{\alpha-1}.
\]
Let us compute $C$. On the right, the coefficient of $x^{\alpha(\alpha-1)}=x^d$ is $C$. On the left, it is
\[
\sum_{a\in A}c_a(A){\alpha+d-1\choose d}a^{\alpha-1}={\alpha+d-1 \choose d}.
\]
Therefore, $C={\alpha+d-1 \choose d}$.

Computing the coefficient of $x^{d-2}$, we get on the left
\[
{\alpha+d-1\choose d-2}\sum_{a\in A}c_a(A)a^{\alpha+1}={\alpha+d-1\choose d-2}h_2(A)=\frac12{\alpha+d-1\choose d-2}p_2(A),
\]
because of the assumption $p_1(A)=0$. On the right we get
\[
C\left(\frac{(\alpha-1)(\alpha-2)}{2}\sum_{a\in A}a^2+(\alpha-1)^2\sum_{\substack{a_1,a_2\in A\\a_1\neq a_2}}a_1a_2\right).
\]
Since $p_1(A)=0$, we see that the second sum is $-p_2(A)/2$ and we obtain
\[
\frac12{\alpha+d-1\choose d-2}p_2(A)=-\frac12{\alpha+d-1\choose d}(\alpha-1)p_2(A).
\]
Since $\alpha+d-1=\alpha^2-1<p$, none of the factorials that appear below are divisible by $p$. If $p_2(A)\neq 0$, we have
\[
{\alpha+d-1\choose d-2}\equiv -(\alpha-1){\alpha+d-1\choose d}\pmod p.
\]
Hence
\[
\frac{(\alpha+d-1)!}{(d-2)!(\alpha+1)!}\equiv -\frac{(\alpha-1)(\alpha+d-1)!}{d!(\alpha-1)!}\pmod p.
\]
This leads to
\[
\frac{d!}{(d-2)!}\equiv -\frac{(\alpha-1)(\alpha+1)!}{(\alpha-1)!}\pmod p.
\]
Thus,
\[
d(d-1)\equiv -(\alpha-1)\alpha(\alpha+1)\pmod p.
\]
Since $d=\alpha(\alpha-1)$, we derive
\[
\alpha+1\equiv 1-d \pmod p.
\]
Therefore, $\alpha+d$ is divisible by $p$, so $\alpha^2\equiv 0\pmod p$, which is a contradiction.

Next, let us compute a coefficient at $x^{d-3}$. If $d\neq 2,6$, then $\alpha\geq 4$ and $d\geq 12$, so the $-1$ in the formula for $HP$ does not affect the $(d-3)$-rd coefficient. For the polynomial on the left, the coefficient of $x^{d-3}$ is
\[
{\alpha+d-1\choose d-3}\sum_{a\in A}c_a(A)a^{\alpha+2}={\alpha+d-1\choose d-3}h_3(A)=\frac13{\alpha+d-1\choose d-3}p_3(A),
\]
because $p_1(A)=p_2(A)=0$. On the right we get $C$ times the coefficient of $\prod_{a\in A}(x+a)^{\alpha-1}=x^d\prod_{a\in A}(1+ax^{-1})^{\alpha-1}$. Since $p_1(A)=p_2(A)=0$ imply that the second elementary symmetric polynomial of $A$ is zero,
\[
\prod_{a\in A}(1+ax^{-1})=1+e_3(A)x^{-3}+\ldots,
\]
where $e_3(A)$ is the third elementary symmetric polynomial. But by Newton's identities
\[
e_3(A)=\frac16 p_1(A)^3-\frac12 p_1(A)p_2(A)+\frac13 p_3(A)=\frac13 p_3(A).
\]
Hence the $(d-3)$-rd coefficient of $\prod\limits_{a\in A}(x+a)^{\alpha-1}$  is $\frac{\alpha-1}{3}p_3(A)$. Therefore, if $p_3(A)\neq 0$, we have
\[
{\alpha+d-1\choose d-3}\equiv(\alpha-1)C\equiv(\alpha-1){\alpha+d-1 \choose d} \pmod p
\]
This implies that
\[
\frac{(\alpha+d-1)!}{(d-3)!(\alpha+2)!}\equiv \frac{(\alpha-1)(\alpha+d-1)!}{d!(\alpha-1)!}\pmod p.
\]
Therefore,
\[
\frac{d!}{(d-3)!}\equiv \frac{(\alpha-1)(\alpha+2)!}{(\alpha-1)!} \pmod p,
\]
which reduces to
\[
d(d-1)(d-2)\equiv (\alpha-1)\alpha(\alpha+1)(\alpha+2) \pmod p.
\]
Substituting $d=\alpha^2-\alpha$, expanding the brackets and subtracting, we derive
\[
\alpha^6-3\alpha^5-\alpha^4+3\alpha^3\equiv 0\pmod p.
\]
Luckily, the polynomial on the left factors as $\alpha^3(\alpha+1)(\alpha-1)(\alpha-3)$ and $1<\alpha<p-1$, so $\alpha=3$ and $d=\alpha^2-\alpha=6$, which concludes the proof.
\end{proof}

As a consequence of Lemma 5, we prove
\begin{lemma}
For $d\neq 2,6$ and $d$-critical $(A,-A)$, the identities
\[
p_2(A)=\frac{1}{\alpha}p_1(A)^2, p_3(A)=\frac{1}{\alpha^2}p_1(A)^3
\]
hold.
\end{lemma}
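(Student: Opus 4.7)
The plan is to reduce to the hypothesis of Lemma 5 by an affine shift. Observe that $d$-criticality of $(A,-A)$ is preserved under shifts: for any $t\in\mathbb F_p$, the set $A'=A+t$ satisfies $A'+(-A')=A+(-A)\subseteq\mu_d\cup\{0\}$, and since $|(-A')\cap A'|=|(-A)\cap A|$ (both count solutions of $a_1+a_2=0$ within $A$, translated), the pair $(A',-A')$ is also $d$-critical with the same parameters $\alpha$ and $d$.

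Next I would choose the shift that kills the first power sum. Since $\alpha^2-1<p$, we have $\alpha\not\equiv 0\pmod p$, so $t:=-p_1(A)/\alpha$ is a well-defined element of $\mathbb F_p$. Setting $A'=A+t$, a direct computation gives $p_1(A')=p_1(A)+\alpha t=0$. Since $d\neq 2,6$, Lemma 5 applies to $A'$ and yields $p_2(A')=p_3(A')=0$.

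Finally I would expand the power sums of $A'$ in terms of those of $A$ and $t$ and solve. Binomial expansion gives
\[
p_2(A')=p_2(A)+2tp_1(A)+\alpha t^2,\qquad p_3(A')=p_3(A)+3tp_2(A)+3t^2p_1(A)+\alpha t^3.
\]
Substituting $t=-p_1(A)/\alpha$ into the first identity and using $p_2(A')=0$ immediately yields $p_2(A)=p_1(A)^2/\alpha$. Plugging both this and the value of $t$ into the expression for $p_3(A')$, the terms involving $p_1(A)^3$ collect (as $-3/\alpha^2+3/\alpha^2-1/\alpha^2=-1/\alpha^2$) so that $p_3(A')=0$ gives $p_3(A)=p_1(A)^3/\alpha^2$.

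There is essentially no obstacle: once the shift-invariance of $d$-criticality is noted, Lemma 5 does all the work and the rest is routine substitution. The only mild point to check is that $\alpha$ is invertible modulo $p$, which follows from the standing assumption $\alpha+d-1<p$ used in the definition of the Hanson-Petridis polynomial.
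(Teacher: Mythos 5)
Your proposal is correct and follows essentially the same route as the paper: shift $A$ by $-p_1(A)/\alpha$, invoke the shift-invariance of $d$-criticality and Lemma 5, then expand $p_2$ and $p_3$ of the shifted set; your coefficient bookkeeping matches the paper's. The only difference is cosmetic: you spell out the shift-invariance of $d$-criticality and the invertibility of $\alpha$, which the paper leaves implicit.
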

\begin{proof}
Indeed, if $(A,-A)$ is $d$-critical, then for $B=A-\frac{p_1(A)}{\alpha}$ the pair $(B,-B)$ is $d$-critical and $p_1(B)=0$. By Lemma 5, $p_2(B)=p_3(B)=0$. But
\[
p_2(B)=\sum_{a\in A}\left(a-\frac{p_1(A)}{\alpha}\right)^2=p_2(A)-\frac{p_1(A)^2}{\alpha}
\]
and therefore $p_2(A)=\frac{p_1(A)^2}{\alpha}$. Next, we obtain
\[
0=p_3(B)=\sum_{a\in A}\left(a-\frac{p_1(A)}{\alpha}\right)^3=p_3(A)-\frac{3p_1(A)p_2(A)}{\alpha}+\frac{3p_1(A)^3}{\alpha^2}-\frac{p_1(A)^3}{\alpha^2},
\]
which concludes the proof, because the second and the third summands cancel out.
\end{proof}

The next crucial step is to notice that we can produce more $d$-critical pairs from $(A,-A)$.

\begin{lemma}
If $(A,-A)$ is $d$-critical, then for any $a\in A$ the set
\[
A^a=\{0\}\cup \left\{\frac{1}{a-a'}:a'\in A, a'\neq a\right\}.
\]
also forms a $d$-critical pair $(A^a,-A^a)$.
\end{lemma}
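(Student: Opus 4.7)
The plan is to verify the two defining conditions of a $d$-critical pair for $(A^a,-A^a)$ directly: that every difference of two elements of $A^a$ lies in $\mu_d\cup\{0\}$, and that the sizes satisfy $|A^a|^2=d+|A^a|$.

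First I would check that $|A^a|=\alpha$. The map $a'\mapsto 1/(a-a')$ from $A\setminus\{a\}$ to $\mathbb F_p^*$ is well-defined (since $a\neq a'$) and injective, and its image consists of nonzero elements while $0$ is added separately; hence $|A^a|=1+(\alpha-1)=\alpha$. Combined with the identity $\alpha(\alpha-1)=d$ coming from $d$-criticality of $(A,-A)$, this immediately gives $|A^a|\cdot|{-A^a}|=\alpha^2=d+\alpha=d+|(-A^a)\cap(-A^a)|$, which is exactly the cardinality condition in the definition of $d$-criticality applied to $(A^a,-A^a)$.

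Next I would check the containment $A^a-A^a\subseteq\mu_d\cup\{0\}$ by splitting into three kinds of differences. The difference $\frac{1}{a-a'}-0$ (and its negative) is in $\mu_d$ because $a-a'\in\mu_d$ (as $(A,-A)$ is $d$-critical and $a\neq a'$), and $\mu_d$ is closed under inversion and negation (since $-1\in\mu_d$ is forced by $A-A$ being symmetric). For the interesting case, a difference of two nonzero elements of $A^a$ has the form
\[
\frac{1}{a-a'}-\frac{1}{a-a''}=\frac{a'-a''}{(a-a')(a-a'')}.
\]
If $a'=a''$ this is $0$; otherwise the numerator lies in $\mu_d$ (again by $d$-criticality of $(A,-A)$ and distinctness) and each factor of the denominator lies in $\mu_d$, so the whole ratio lies in $\mu_d$ because $\mu_d$ is a multiplicative group.

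These two verifications, together with the cardinality identity derived above, give exactly the conditions in Definition 2 for $(A^a,-A^a)$. There is no real obstacle: the one subtlety worth flagging is the implicit use of $-1\in\mu_d$ when handling the differences $0-\frac{1}{a-a'}$, which is automatic because $A-A=\mu_d\cup\{0\}$ is symmetric about $0$. The utility of the lemma, of course, lies not in its proof but in being able to iterate the construction $A\mapsto A^a$ and combine it with Lemma 6 to get many constraints on the moments $p_k(A)$.
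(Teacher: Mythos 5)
Your proof is correct and follows essentially the same route as the paper: the key step in both is the identity $\frac{1}{a-a'}-\frac{1}{a-a''}=\frac{a'-a''}{(a-a')(a-a'')}$ together with the fact that $\mu_d$ is a multiplicative group containing $-1$. You are somewhat more explicit than the paper about the cardinality condition $\alpha^2=d+\alpha$ (which the paper treats as immediate from $|A^a|=\alpha$ and the relation $\alpha(\alpha-1)=d$ established just before Lemma 5), but this is only a difference in level of detail, not in method.
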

\begin{proof}
Obviously, the set $A^a$ has the same size as $A$. So it remains to prove that $A^a-A^a\subseteq \mu_d\cup\{0\}$. Clearly, for every non-zero $x\in A^a$ we have $\pm x\in \mu_d$. Suppose that both $x$ and $y$ in $A^a$ are non-zero, then for some $a',a''$ we have
\[
x-y=\frac{1}{a-a'}-\frac{1}{a-a''}=\frac{a'-a''}{(a-a')(a-a'')}\in \mu_d,
\]
which concludes the proof.
\end{proof}
Lemmas 6 and 7 imply
\begin{corollary}
If $(A,-A)$ is $d$-critical and $d\neq 2$ or $6$, then for any $a\in A$ we have
\begin{equation}
\label{rat2}
\sum_{a'\in A\setminus\{a\}}\frac{1}{(a-a')^2}=\frac{1}{\alpha}\left(\sum_{a'\in A\setminus\{a\}}\frac{1}{a-a'}\right)^2
\end{equation}
\begin{equation}
\label{rat3}
\sum_{a'\in A\setminus\{a\}}\frac{1}{(a-a')^3}=\frac{1}{\alpha^2}\left(\sum_{a'\in A\setminus\{a\}}\frac{1}{a-a'}\right)^3
\end{equation}
\end{corollary}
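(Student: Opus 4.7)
The plan is a direct combination of Lemmas 6 and 7. Fix an arbitrary $a\in A$; Lemma 7 produces from the $d$-critical pair $(A,-A)$ a new $d$-critical pair $(A^a,-A^a)$, where
$$A^a=\{0\}\cup\left\{\frac{1}{a-a'}:a'\in A,\,a'\neq a\right\}$$
has the same size $\alpha$ as $A$. Since the hypothesis $d\notin\{2,6\}$ transfers verbatim to the new pair, Lemma 6 applies to $A^a$ and yields
$$p_2(A^a)=\frac{1}{\alpha}\,p_1(A^a)^2,\qquad p_3(A^a)=\frac{1}{\alpha^2}\,p_1(A^a)^3.$$

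Next I would unpack the power sums on the left-hand sides. Because $0\in A^a$ contributes nothing to $p_k(A^a)$ for $k\geq 1$, one has
$$p_k(A^a)=\sum_{a'\in A\setminus\{a\}}\frac{1}{(a-a')^k}$$
for $k=1,2,3$. Substituting these expressions into the two identities supplied by Lemma 6 yields exactly \eqref{rat2} and \eqref{rat3}.

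There is essentially no obstacle here: once Lemmas 6 and 7 are available, the corollary is just the restatement of their conclusions for a cleverly chosen auxiliary set. The only minor verification is that $A^a$ genuinely has $\alpha$ distinct elements, so that the power sums above are well-defined and no denominator collapses; but this is automatic, since the map $a'\mapsto(a-a')^{-1}$ is an injection from $A\setminus\{a\}$ into $\mathbb F_p^*$, and its image does not contain $0$.
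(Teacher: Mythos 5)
Your proposal is correct and is exactly the argument the paper intends: the paper states the corollary with only the phrase ``Lemmas 6 and 7 imply,'' and your unpacking --- apply Lemma 7 to get the $d$-critical pair $(A^a,-A^a)$ with $|A^a|=\alpha$, then apply Lemma 6 to $A^a$ and note that $0\in A^a$ contributes nothing to $p_k(A^a)$ for $k\geq 1$ --- is precisely the omitted verification.
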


We are going to establish the impossibility of these relations in a slightly more general setting. 

In the next theorem, $\alpha$ is just the size of $A$ and is not required to satisfy a relation $\alpha^2-\alpha=d$.

\begin{theorem}
Suppose that $K$ is an algebraically closed field. Assume that $A\subseteq K$ is a subset of size $\alpha>1$. If $\mathrm{char}\,K>0$, assume also that $\alpha<\sqrt{\mathrm{char}\,K}$. Suppose that the relations \ref{rat2} and \ref{rat3} hold for all $a\in A$. Then $\alpha=5$ and for some $C,D\in K$ we have
\[
A=\{C,C+D,C-D,C+Di,C-Di\},
\]
where $i\in K$ and $i^2=-1$.
\end{theorem}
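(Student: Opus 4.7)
The plan is to translate the two hypotheses into polynomial divisibilities for $P(x) = \prod_{a \in A}(x-a)$ and then argue by cases in $\alpha$. From the factorization $P(a+t) = tP'(a) \prod_{a'\neq a}(1 + t/(a-a'))$ and the logarithmic expansion $\sum_{a'\neq a}\log(1 + t/(a-a')) = \sum_{k\geq 1} (-1)^{k-1}T_k(a)t^k/k$, where $T_k(a) := \sum_{a' \neq a}(a-a')^{-k}$, one reads off the $T_k(a)$ from the Taylor coefficients of $P$ at $a$. Substituting the hypotheses $T_2(a) = T_1(a)^2/\alpha$ and $T_3(a) = T_1(a)^3/\alpha^2$ into the $t^2$ and $t^3$ coefficients of this identity yields, at every root $a$,
\[
4\alpha P'(a)P'''(a) = 3(\alpha-1)P''(a)^2, \qquad 2\alpha^2 P'(a)^2 P^{(4)}(a) = (\alpha-1)(\alpha-2)P''(a)^3.
\]
Since $P$ has simple roots and the bound $\alpha < \sqrt{\mathrm{char}\,K}$ keeps all relevant denominators invertible, these are equivalent to $P\mid Q_1$ and $P\mid Q_2$ with $Q_1 = 4\alpha P'P''' - 3(\alpha-1)(P'')^2$ and $Q_2 = 2\alpha^2(P')^2 P^{(4)} - (\alpha-1)(\alpha-2)(P'')^3$.

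Both relations are affinely invariant in $A$, so I would translate to center $P$: $P(x) = x^\alpha + c_2 x^{\alpha-2} + \cdots + c_\alpha$. A short computation shows that the leading coefficient of $Q_1$ is $\alpha^2(\alpha-1)(\alpha-3)(\alpha+1)$, which is nonzero in $K$ by the characteristic hypothesis, so $L_1 := Q_1/P$ is a polynomial of degree $\alpha - 4$ whose coefficients are explicit polynomial expressions in the $c_i$. Matching the $2\alpha - 3$ coefficients of the identity $Q_1 = L_1 \cdot P$ produces a polynomial system in $c_2, \ldots, c_\alpha$ (and the coefficients of $L_1$); the analogous system comes from $Q_2 = L_2 \cdot P$. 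The small cases $\alpha \in \{2, 3, 4\}$ are immediate: coefficient comparison in $Q_1$, supplemented by $P\mid Q_2$ when $\alpha = 3$ (where $\deg Q_1 < \alpha$ forces $Q_1 \equiv 0$), already implies $P = x^\alpha$, contradicting the simplicity of the roots. For $\alpha = 5$, $L_1$ is linear, and matching the coefficients of $Q_1 = L_1\cdot P$ degree by degree forces $c_2 = c_3 = c_5 = 0$, leaving $P(x) = x^5 + c_4 x$ with $c_4 \neq 0$ (else $0$ is a quintuple root); choosing $D \in K$ with $D^4 = -c_4$ gives $A = \{0, \pm D, \pm iD\}$, which after undoing the initial translation is exactly the claimed form.

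The main obstacle is the case $\alpha \geq 6$. Here the goal is to show that the combined system $Q_1 = L_1 P$, $Q_2 = L_2 P$ forces $c_2 = \cdots = c_\alpha = 0$, so $P = x^\alpha$ --- contradicting again the simplicity of the roots. The approach I would take is to exploit the quasi-homogeneity of the system (under the scaling $A \mapsto \lambda A$ each $c_k$ has weight $k$) and match coefficients in decreasing order of degree, obtaining equations of the form $R_k(\alpha)\, c_k = (\text{polynomial in earlier } c_j)$ for specific rational functions $R_k(\alpha)$. The delicate step is to verify that for every $\alpha \geq 6$ none of the $R_k(\alpha)$ vanish in $K$, so that the system unravels inductively and kills every $c_k$; equivalently, one must rule out any miraculous cancellation of the sort that at $\alpha = 5$ leaves $c_4$ as a free parameter. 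This is where both the bound $\alpha < \sqrt{\mathrm{char}\,K}$ and the careful tracking of the specific rational functions $R_k(\alpha)$ enter; establishing this uniformly in $\alpha \geq 6$ is the bulk of the work.
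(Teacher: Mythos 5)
Your reduction of the hypotheses to the pointwise conditions $Q_1(a)=Q_2(a)=0$, hence $P\mid Q_1$ and $P\mid Q_2$, is correct (and these divisibilities are in fact equivalent to \eqref{rat2}--\eqref{rat3} since $P$ has simple roots), and your treatment of $\alpha\in\{2,3,4,5\}$ is consistent with the paper's conclusions. But the argument for $\alpha\ge 6$ is not a proof: you reduce it to the claim that a coefficient-by-coefficient elimination of the system $Q_1=L_1P$, $Q_2=L_2P$ kills every $c_k$ because certain leading factors $R_k(\alpha)$ never vanish, and you explicitly leave that claim unverified. The difficulty is real, not bookkeeping: the paper exhibits $f(x)=x^{11}+11x^6+x$, which satisfies the natural second-order consequence of your system (the identity $\mathcal Df=0$ below) with two surviving coefficients, and is only excluded by returning to the original relation \eqref{rat2} at the roots and showing $\xi^5$ would have to take a single value for all ten non-zero roots. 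So a uniform ``$R_k(\alpha)\neq 0$ for $\alpha\ge 6$'' cannot hold in the clean triangular form you envisage; at minimum the interaction between the two quotient systems at $\alpha=11$ requires a separate, genuinely computational argument that your proposal does not contain.

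The idea you are missing is how the paper escapes the quotients $L_1,L_2$ altogether. Because \eqref{rat2} and \eqref{rat3} together give the four-term approximation $f(x)=cF_a(x)+O((x-a)^5)$ with $F_a(x)=(x-a)(1+s(x-a))^\alpha$, one can build a single bilinear differential operator
\[
\mathcal Dg=4\alpha(\alpha-2)g'g'''-3(\alpha-1)(\alpha-2)(g'')^2-\alpha(\alpha+1)gg''''
\]
that annihilates every $F_a$ identically; the extra factor $gg''''$ (which already vanishes to order one at each root) is tuned so that $\mathcal Df$ vanishes to order \emph{two} at every $a\in A$. That yields at least $2\alpha$ zeros against $\deg\mathcal Df\le 2\alpha-4$, forcing $\mathcal Df\equiv 0$ identically --- no unknown quotient polynomial remains. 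The entire system then collapses to the single equation $D(\alpha,l)A_l=0$ on the top non-trivial coefficient, with $D(\alpha,l)=-\alpha(\alpha-l+1)(\alpha-l)(\alpha-l-1)(\alpha^2-(l+5)\alpha-l+6)$, and a short Diophantine analysis of $\alpha^2-(l+5)\alpha-l+6=0$ leaves only $(l,\alpha)\in\{(0,2),(0,3),(1,5),(6,11)\}$, each of which is settled by hand. To repair your write-up you would either need to carry out and verify the elimination for all $\alpha\ge 6$ (including the $\alpha=11$ obstruction), or adopt the order-two vanishing trick, which is where the actual content of the theorem lies.
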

\begin{proof}
To achieve this result, we also apply Stepanov's polynomial method, although in a different way. Since the relations $\ref{rat2},\ref{rat3}$ are shift-invariant, we can always assume that the sum of elements of $A$ is zero. Let
\[
f(x)=\prod_{a\in A}(x-a)=\sum_{0\leq n\leq \alpha}A_n x^n.
\]
Let us study the Laurent expansion of $\frac{f'}{f}(x)$ around the point $x=a$. We have
\[
\frac{f'}{f}(x)=\frac{1}{x-a}+\sum_{a'\in A\setminus\{a\}}\frac{1}{x-a'}=\frac{1}{x-a}+\sum_{a'\in A\setminus\{a\}}\frac{1}{a-a'+x-a}=
\]
\[
=\frac{1}{x-a}+\sum_{a'\in A\setminus\{a\}}\left(\frac{1}{a-a'}-\frac{x-a}{(a-a')^2}+\frac{(x-a)^2}{(a-a')^3}\right)+O((x-a)^3).
\]
This implies that
\[
\frac{f'}{f}(x)=\frac{1}{x-a}+\sum_{a'\in A\setminus\{a\}}\frac{1}{a-a'}-(x-a)\sum_{a'\in A\setminus\{a\}}\frac{1}{(a-a')^2}+(x-a)^2\sum_{a'\in A\setminus\{a\}}\frac{1}{(a-a')^3}+O((x-a)^3).
\]
Next, let 
\[s=\frac{1}{\alpha}\sum\limits_{a'\in A\setminus\{a\}}\frac{1}{a-a'}.\]

From the formula above and relations \ref{rat2} and \ref{rat3} we deduce
\[
\frac{f'}{f}(x)=\frac{1}{x-a}+\alpha s-(x-a)\alpha s^2+(x-a)^2\alpha s^3+O((x-a)^3)=\frac{1}{x-a}+\frac{\alpha s}{1+s(x-a)}+O((x-a)^3).
\]
This means that the logarithmic derivative of $f$ can be approximated in the formal neighbourhood of $x=a$ by a much simpler rational function. Unfortunately, due to the dependence of this rational function on $a$ and $s$, these rational functions do not necessarily ``glue together'' for different $a\in A$. So, instead we construct some differential operator, which will annihilate the corresponding approximation (in its non-logarithmic form). To do so, define
\[
F_a(x)=(x-a)(1+s(x-a))^\alpha.
\]
Then we see that
\[
\frac{f'}{f}(x)=\frac{F_a'}{F_a}(x)+O((x-a)^3),
\]
so
\[
\frac{f'F_a-fF_a'}{fF_a}(x)=O((x-a)^3).
\]
Multiplying both sides by $f/F_a$ and using the fact that this is $O(1)$ at $x=a$, we see that
\[
\left(\frac{f}{F_a}\right)'=\frac{f'F_a-fF_a'}{fF_a}(x)=O((x-a)^3).
\]
Integrating, we conclude that there is a constant $c\in F$ such that
\[
\frac{f}{F_a}(x)=c+O((x-a)^4),
\]
so we arrive at a close approximation
\[
f(x)=cF_a(x)+O((x-a)^5).
\]
Next, let us define a differential operator $\mathcal D:K[x]\to K[x]$, designed to annihilate $F_a(x)$ for all $a$:
\[
\mathcal Dg=4\alpha(\alpha-2)g'g'''-3(\alpha-1)(\alpha-2)g''^2-\alpha(\alpha+1)gg''''.
\]
We claim that $\mathcal DF_a=0$. To prove this, notice that for any $m\geq 0$ we have
\[
F_a(x)^{(m)}=(\alpha)_ms^m(x-a)(1+s(x-a))^{\alpha-m}+m(\alpha)_{m-1}s^{m-1}(1+s(x-a))^{\alpha-m+1},
\]
where $(\alpha)_m=\alpha(\alpha-1)\ldots(\alpha-m+1)$ and $(\alpha)_0=1$ (one can also set $(\alpha)_{-1}=\frac{1}{\alpha-1}$, but this exact value is not important, since for $m=0$ we multiply it by $0$). In particular, for $m=0,1,2$ we get
\[
F_a(x)^{(m)}F_a(x)^{(4-m)}=
\]
\[
=s^2(1+s(x-a))^{2\alpha-2}((\alpha)_ms(x-\alpha)+m(\alpha)_{m-1}(1+s(x-a)))((\alpha)_{4-m}s(x-\alpha)+(4-m)(\alpha)_{3-m}(1+s(x-a)))
\]
\[
=s^2(1+T)^{2\alpha-2}((\alpha)_mT+m(\alpha)_{m-1}(T+1))((\alpha)_{4-m}T+(4-m)(\alpha)_{3-m}(T+1)),
\]
where we set $T=s(x-\alpha)$ for convenience. Thus, we derive
\[
\mathcal DF_a(x)=s^2(1+T)^{2\alpha-2}(4\alpha(\alpha-2)(\alpha T+T+1)((\alpha)_3T+3(\alpha)_2(T+1))-3(\alpha-1)(\alpha-2)((\alpha)_2T+2\alpha(T+1))^2-
\]
\[
-\alpha(\alpha+1)T((\alpha)_4T+4(\alpha)_3(T+1)))=s^2(1+T)^{2\alpha-2}G(\alpha,T)
\]
Direct computation then shows $G(\alpha,T)=0$.

Observe now that $\mathcal D$ exhibits a degree of continuity in $(x-a)$-adic topology. In particular, since
\[
f(x)=cF_a(x)+O((x-a)^5),
\]
we have
\[
f^{(m)}(x)=cF_a^{(m)}(x)+O((x-a)^{5-m}).
\]
Hence,
\[
f'f'''=c^2F_a'F_a'''+O((x-a)^2)
\]
and
\[
f''^2=c^2F_a''^2+O((x-a)^3).
\]
Now, both $f$ and $F$ are $O(x-a)$, so we also obtain
\[
ff''''=c^2F_aF_a''''+O((x-a)^2).
\]
Therefore,
\[
\mathcal Df=c^2\mathcal DF+O((x-a)^2)=O((x-a)^2).
\]
In particular, the polynomial $\mathcal Df$ has a zero of order at least $2$ at every $x=a$. The degree of $\mathcal Df$ is clearly at most $2\alpha-4$, since the derivative lowers the degree by $1$, while the number of zeros of this polynomial counted with multiplicity is at least $2\alpha$. Therefore, $\mathcal Df=0$.

Next, let $l$ be the largest power below $\alpha$ such that $A_l\neq 0$, i.e. $f(x)$ has a non-zero coefficient at $x^l$. Then the coefficient of $\mathcal Df$ at $x^{\alpha+l-4}$ is equal to
\[
D(\alpha,l)A_l=(4\alpha(\alpha-2)(\alpha(l)_3+l(\alpha)_3)-6(\alpha-1)(\alpha-2)(\alpha)_2(l)_2-\alpha(\alpha+1)((\alpha)_4+(l)_4))A_l.
\]
The polynomial $D(\alpha,l)$ admits the following factorization:
\[
D(\alpha,l)=-\alpha(\alpha-l+1)(\alpha-l)(\alpha-l-1)(\alpha^2-(l+5)\alpha-l+6).
\]
Since we assumed that the sum of elements of $A$ is zero, $A_{\alpha-1}=0$, so $l<\alpha-1$. On the other hand, if the characteristic of $K$ is equal to $p>0$, then $\alpha<\sqrt{p}$, so none of the first four factors can be zero. Therefore, we necessarily have $\alpha^2-(l+5)\alpha-l+6=0$ as an element of the field $K$. As an integer, this is at least $\alpha^2-(\alpha+3)\alpha-\alpha+8=8-4\alpha>-\alpha^2$ and at most
$\alpha^2-5\alpha+6<\alpha^2$ (since $\alpha>1$). Therefore, even if $\mathrm{char}\,K\neq 0$, we have an equality of integers
\[
\alpha^2-(l+5)\alpha-l+6=0.
\]
In particular, the discriminant with respect to $\alpha$ should be a square. I.e. $(l+5)^2+4l-24=l^2+14l+1=(l+7)^2-48$ must be a square of an integer. A short search among divisors of $48$ shows that $(l,\alpha)\in \{(0,2),(0,3),(1,5),(6,11)\}$. Clearly, $\alpha=2$ or $3$ are impossible: in the first case, we have $A=\{a_1,a_2\}$ and the relation \ref{rat2} says that
\[
\left(\frac{1}{a_1-a_2}\right)^2=\frac{1}{2}\frac{1}{(a_1-a_2)^2},
\]
which is a contradiction. If $\alpha=3$, then $A=\{a_1,a_2,a_3\}$ and for non-zero $x=\frac{1}{a_1-a_2}, y=\frac{1}{a_1-a_3}$ we get by $\ref{rat2},\ref{rat3}$
\[
3(x^2+y^2)=(x+y)^2, 9(x^3+y^3)=(x+y)^3,
\]
hence $3(1+z^2)=(1+z)^2$ and $9(z^3+1)=(1+z)^3$ for $z=y/x$. The resultant of polynomials $3(1+z^2)-(1+z)^2$ and $9(1+z^3)-(1+z)^3$ is equal to $216$, hence $\mathrm{char}\,K$ is either $2$ or $3$, which is a contradiction.

If $\alpha=11, l=6$, then
\[
f(x)=x^{11}+A_6x^6+\ldots+A_0
\]
and we compute
\[
\mathcal Df(x)=-27720\cdot A_5x^{12}- 88704\cdot A_4x^{11} -199584\cdot A_3x^{10} - 380160\cdot A_2x^9 - 
\]
\[
-(5400\cdot A_6^2 + 653400\cdot A_1)x^8 - (7200\cdot A_5A_6+1045440\cdot A_0)x^7+O(x^6)
\]
Next, under our assumptions, in this case characteristic of $K$ is either $0$ or larger than $121$, but all the coefficients above factor into primes $\leq 11$:
\[
27720=2^3\cdot 3\cdot 5\cdot 7\cdot 11, 88704=2^7\cdot 3^2\cdot 7\cdot 11, 199584=2^5\cdot 3^4\cdot 7\cdot 11,
\]
\[
380160=2^8\cdot 3^3\cdot 5\cdot 11, 1045440=2^6\cdot 3^3\cdot 5\cdot 11^2.
\]
This implies that necessarily $A_5=A_4=A_3=A_2=A_0=0$. Since all prime factors of $5400$ are $2,3,5$, from the $8$-th coefficient we also get $A_6^2=121A_1$. This means that our polynomial is of the form
\[
f(x)=x^{11}+11ux^6+u^2x,
\]
where $u=A_6/11$. Multiplying the whole set $A$ by any fifth root of $1/u$, we see that one can take $u=1$ without loss of generality. Although in this case we indeed have $\mathcal Df=0$, we now verify that the set $A$ of roots of $f(x)=x^{11}+11x^6+x$ fails to satisfy condition \ref{rat2}. Indeed, if $\xi$ is a root of $f(x)$, then the Taylor expansion of $f(x)$ gives
\[
f(x)=f'(\xi)(x-\xi)+\frac{f''(\xi)}{2}(x-\xi)^2+\frac{f'''(\xi)}{6}(x-\xi)^3+O((x-\xi)^4)
\]
and taking logarithmic derivatives we see that
\[
\frac{f'}{f}(x)=\frac{1}{x-\xi}+\frac{f''(\xi)}{2f'(\xi)}+\left(\frac{f'''(\xi)}{3f(\xi)}-\frac{f''(\xi)^2}{4f'(\xi)^2}\right)(x-\xi)+O((x-\xi)^2).
\]
In our case, $\alpha=11$ and the condition \ref{rat2} reads
\[
-\frac{f''(\xi)^2}{44f'(\xi)^2}=\frac{f'''(\xi)}{3f(\xi)}-\frac{f''(\xi)^2}{4f'(\xi)^2}.
\]
Equivalently, we must have
\[
15f''(\xi)^2=22f'(\xi)f'''(\xi)
\]
for all roots $\xi$ of $f(x)$. Left-hand side is
\[
15(110\xi^9+330\xi^4)^2=1500\cdot 121\xi^8(\xi^5+3)^2.
\]
The right-hand side simplifies to
\[
22(11\xi^{10}+66\xi^5+1)(990\xi^8+1320\xi^3)=60\cdot 121\xi^3(11\xi^{10}+66\xi^5+1)(3\xi^5+4).
\]
Assuming $\xi\neq 0$, from invertibility of $3,5$ and $11$ in $K$ we obtain,
\[
25(\xi^5+3)^2=(11\xi^{10}+66\xi^5+1)(3\xi^5+4).
\]
All non-zero roots of $f$ satisfy $\xi^{10}=-1-11\xi^5$, so $(\xi^5+3)^2=\xi^{10}+6\xi^5+9=8-5\xi^5$ and $11\xi^{10}+66\xi^5+1=-10-55\xi^5$. Therefore,
\[
(11\xi^{10}+66\xi^5+1)(3\xi^5+4)=-(10+55\xi^5)(3\xi^5+4)=-40-250\xi^5-165\xi^{10}=
\]
\[
=-40-250\xi^5+165+1815\xi^5=125+1565\xi^5.
\]
Subtracting, we deduce that the identity \ref{rat2} for non-zero roots of $f$ is equivalent to
\[
1690\xi^5=75.
\]
This means that $\xi^5$ equals  $\frac{75}{1690}=\frac{15}{338}$, there are at most $5$ such values of $\xi$, but $f(x)$ must have $10$ non-zero roots, which is a contradiction.

Finally, if $\alpha=5$ and $l=1$, we get
\[
f(x)=x^5+ax+b,
\]
and dividing by a fourth root of $-a$ we can assume that $a=-1$. Then $f(x)=x^5-x+b$ and
\[
\mathcal Df=60f'f'''-36f''^2-30ff''''=-3600bx,
\]
hence $b=0$ and up to affine transformation the set $A$ is equal to $\{0,\pm 1,\pm i\}.$
\end{proof}

Now we are ready to finish the proof of Theorem 4.
\begin{proof}[Proof of Theorem 4]
If $(A,-A)$ is $d$-critical, then by Corollary 2 we have either $d\in\{2,6\}$ or the relations $\ref{rat2}$ and $\ref{rat3}$ hold for all $a\in A$. In the latter case Theorem 5 implies that $\alpha=5$ and $A=\{C,C\pm D,C\pm iD\}$ (here an algebraically closed $K$ is $K=\overline{\mathbb F}_p$). Thus, $d=\alpha(\alpha-1)=20$, $C+D-C=D\in\mu_{20}$ and $C+D-(C-iD)=D(1+i)$, hence $D^{20}=(1+i)^{20}D^{20}=1$. Since $(1+i)^2=2i$, we have $(1+i)^{20}=-1024$, so $p$ divides $1025=25\cdot 41$. On the other hand, $20=d\mid p-1$, therefore $p=41$ and we get the described example.
\end{proof}

From this we immediately get the Lev-Sonn conjecture:
\begin{proof}[Proof of Theorem 1]
Assume that $A-A=\mu_d$. Then $(A,-A)$ is $d$-critical and by Theorem 4 we get either $d=2$ or $6$ or $p=41$ and $d=20$. For $p=41$, the set $A$ takes form $\{C,C\pm D,C\pm iD\}$. Due to the identity $(C+D)-(C-iD)=(C+iD)-(C-D)$ we get at most $5\cdot 4-1=19$ distinct non-zero differences, hence  $A-A\neq\mu_{20}$. This concludes the proof.
\end{proof}

\section{The $\alpha=\beta$ theorem.}

In this section, we prove Theorem 2 and make some crucial steps towards the proof of S\'ark\"ozy's conjecture. Let $d$ be as above and assume that $A+B=\mu_d$, where $|A|=\alpha$, $|B|=\beta$ and $\alpha,\beta>1$. By the results of Section 2, the $(A,B)$ is $d$-critical. Moreover, $0\not\in A+B$, hence $\alpha\beta=d$. Next, let $n$ be the least integer $n>0$ such that $p_n(A)\neq0$. Such $n$ necessarily exists, since otherwise all the elementary symmetric polynomials of $A$ are zero, which is impossible for any set of cardinality at least $2$. Next, $n$ is also the least integer with $p_n(B)\neq 0$ and $\beta p_n(A)=-\alpha p_n(B)$. Indeed, $n\leq \alpha<d$, hence by Lemma 3 we have
\[
\sum_{a\in A,b\in B}(a+b)^n=0.
\]
By binomial theorem, we obtain
\[
\sum_{k=0}^n {n \choose k}p_k(A)p_{n-k}(B)=0.
\]
All the summands with $0<k<n$ are zero, because $p_k(A)=0$, and we deduce
\[
\alpha p_n(B)+\beta p_n(A)=0.
\]
If for some $k<n$ we have $p_k(B)=0$, then from
\[
\sum_{a\in A,b\in B}(a+b)^k=0
\]
we derive $\alpha p_k(B)=0$, which is a contradiction.

Now, let $m=m(A)$ be the least $m\leq \alpha$ such that $p_m(A)\neq 0$ and $n$ does not divide $m$. Define $m(B)$ similarly. Then we get
\begin{lemma}
If $m(A)$ or $m(B)$ exist, then $m(A)=m(B)$ and $\alpha p_m(B)+\beta p_m(A)=0$.
\end{lemma}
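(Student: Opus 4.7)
The plan is to use Lemma 3 in its binomial form
\[
\sum_{j=0}^{m}\binom{m}{j}p_j(A)p_{m-j}(B)=0\qquad(1\le m<d),
\]
together with strong induction on $m$, to strip off the middle terms. The inductive hypothesis to carry is: $p_{m'}(A)=p_{m'}(B)=0$ for every $m'<m$ with $n\nmid m'$; this is automatic for $m'<n$ by the definition of $n$.

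For the inductive step at index $m$, which always lies in the range $m\le\max(\alpha,\beta)<d$ where Lemma~3 applies (since $\alpha\beta=d$ and both sizes exceed one), examine a middle term $0<j<m$ of the sum. Because $n\nmid m$, at least one of $j$ and $m-j$ fails to be divisible by $n$; the inductive hypothesis then forces $p_j(A)=0$ or $p_{m-j}(B)=0$, respectively, and the middle term vanishes. Only the endpoint terms survive, yielding
\[
\alpha p_m(B)+\beta p_m(A)=0.
\]
For $m<m(A)$, minimality of $m(A)$ gives $p_m(A)=0$ and hence $p_m(B)=0$, so the induction continues; at $m=m(A)$ the same identity instead produces $p_{m(A)}(B)\ne 0$ directly from $p_{m(A)}(A)\ne 0$, together with the claimed relation.

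To derive $m(A)=m(B)$, assume without loss of generality that $m(A)$ exists and set $m_0=m(A)\le\alpha$. Then $p_{m_0}(B)\ne 0$ and $n\nmid m_0$, so $m_0$ is admissible for $m(B)$ provided $m_0\le\beta$. A putative $m(B)<m_0$ would contradict what we have just shown: minimality of $m_0$ gives $p_{m(B)}(A)=0$, and the cancellation identity at index $m(B)$ then forces $p_{m(B)}(B)=0$, against the defining property of $m(B)$. Swapping the roles of $A$ and $B$ supplies the reverse inequality, so $m(A)=m(B)$. The only real obstacle is the small bookkeeping needed to align the two admissible ranges $m\le\alpha$ and $m\le\beta$ appearing in the separate definitions of $m(A)$ and $m(B)$; the algebra itself collapses to the single binomial cancellation above.
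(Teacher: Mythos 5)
Your proof is correct and follows essentially the same route as the paper: expand $\sum_{a\in A,b\in B}(a+b)^m=0$ (Lemma 3) by the binomial theorem, note that every middle term contains a power sum whose index is not divisible by $n$ and hence vanishes, and read off $\alpha p_m(B)+\beta p_m(A)=0$ from the endpoint terms. Your inductive bootstrapping of $p_{m'}(B)=0$ for $m'<m(A)$ with $n\nmid m'$ directly from the identity (rather than from the minimality of $m(B)$) is a minor but genuine tightening, since it sidesteps the mismatch between the ranges $m\le\alpha$ and $m\le\beta$ in the two definitions that the paper's version glosses over.
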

\begin{proof}
Suppose that $m(A)\neq m(B)$. Without loss of generality, we can assume that $m(B)>m(A)$. Then for $k<m$ we have $p_k(A)\neq 0$ only if $k\mid n$ and $p_k(B)\neq 0$ only if $k\mid n$. Also, $p_m(B)=0$. Since
\[
\sum_{a\in A,b\in B}(a+b)^m=0,
\]
we get
\[
\sum_{l=0}^m {m \choose l}p_l(A)p_{m-l}(B)=0.
\]
The summand for $l=m$ is equal to $\beta p_m(A)\neq 0$. For $0<l<m$ we have either $n\nmid l$ or $n \nmid m-l$, so $p_l(A)p_{m-l}(B)=0$. For $l=0$ we get $\alpha p_m(B)=0$. Therefore, we have $\beta p_m(A)=0$, which is a contradiction. Examining the sum of $(a+b)^m$ once more, we get the second part of the lemma.
\end{proof}

Unfortunately, in this case there is no ``fractional linear trick'', like in Lemma 7, so we need to work with Hanson-Petridis polynomials more.

Since $(A,B)$ is $d$-critical and $B\cap (-A)=\emptyset$, $\varepsilon(b)=0$ for all $b\in B$, thus Lemma 4 yields
\[
HP(x;A,d)=C\prod_{b\in B}(x-b)^{\alpha}
\]
for some $C\in\mathbb F_p$. Choose any $b\in B$. Applying the change of variables $x\mapsto \frac{1}{x}+b$ to the identity above, we derive
\begin{lemma}
For any $b\in B$, let $A_b=\left\{\frac{1}{a+b}, a\in A\right\}$. Then for some constants $C_0,C_\infty$ we have
\[
\sum_{a\in A}c_{\frac{1}{a+b}}(A_b)(a+b)\left(x+\frac{1}{a+b}\right)^{\alpha+d-1}=C_\infty x^{\alpha+d-1}+C_0x^{\alpha-1}\prod_{b'\in B\setminus\{b\}}\left(x+\frac{1}{b-b'}\right)^\alpha
\]
\end{lemma}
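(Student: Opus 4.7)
The plan is to take the factorization of $HP(x;A,d)$ established via Lemma 4, perform the hinted change of variables $x\mapsto \frac{1}{x}+b$, and then repackage the resulting coefficients using the explicit product formula for $c_a(A)$ from Lemma 1. Concretely, I start from
\[
\sum_{a\in A}c_a(A)(x+a)^{d+\alpha-1}=1+C\prod_{b'\in B}(x-b')^{\alpha},
\]
substitute $x\mapsto \frac{1}{x}+b$, and use the identities $\frac{1}{x}+b+a=\frac{1+(a+b)x}{x}$ and $\frac{1}{x}+b-b'=\frac{1+(b-b')x}{x}$ to pull all powers of $x$ into the denominator. Multiplying through by $x^{d+\alpha-1}$ and using $d=\alpha\beta$ cancels the denominators cleanly, and the factor corresponding to $b'=b$ becomes $1$, which is where the product $\prod_{b'\in B\setminus\{b\}}$ appears.

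Next I isolate the desired shape of the left-hand side. Writing $1+(a+b)x=(a+b)(x+\frac{1}{a+b})$, I get a factor $(a+b)^{d+\alpha-1}$, which collapses to $(a+b)^{\alpha-1}$ by $(a+b)^d=1$ (valid because $a+b\in\mu_d$, using $B\cap(-A)=\emptyset$). So the LHS becomes $\sum_a c_a(A)(a+b)^{\alpha-1}(x+\tfrac{1}{a+b})^{d+\alpha-1}$. To convert $c_a(A)(a+b)^{\alpha-1}$ into the claimed $c_{1/(a+b)}(A_b)(a+b)$, I apply the product formula from Lemma 1: since $\frac{1}{a+b}-\frac{1}{a'+b}=\frac{a'-a}{(a+b)(a'+b)}$, a short computation gives
\[
c_{1/(a+b)}(A_b)=(-1)^{\alpha-1}(a+b)^{\alpha-1}\Big(\prod_{a'\in A\setminus\{a\}}(a'+b)\Big)c_a(A),
\]
and multiplying by $(a+b)$ telescopes the product on the right into $P:=\prod_{a'\in A}(a'+b)$, which is independent of $a$. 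Thus $c_a(A)(a+b)^{\alpha-1}=\frac{(-1)^{\alpha-1}}{P}c_{1/(a+b)}(A_b)(a+b)$, and clearing this scalar factor gives exactly the desired normalization on the LHS.

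Finally, on the right-hand side the factor $x^{d+\alpha-1}$ contributes the $C_\infty x^{d+\alpha-1}$ term with $C_\infty=(-1)^{\alpha-1}P$, while $x^{\alpha-1}\prod_{b'\in B\setminus\{b\}}(1+(b-b')x)^{\alpha}$ is rewritten via $1+(b-b')x=(b-b')(x+\frac{1}{b-b'})$, which pulls out a scalar $Q:=\prod_{b'\in B\setminus\{b\}}(b-b')^{\alpha}$ and leaves the product $\prod_{b'\in B\setminus\{b\}}(x+\frac{1}{b-b'})^{\alpha}$. Setting $C_0=(-1)^{\alpha-1}PCQ$ completes the identity. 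The main subtlety is not any single calculation but making sure that the shift-invariance and explicit form of the coefficients in Lemma 1 correctly account for the $(a+b)^{\alpha-1}$ and the extra sub-product $\prod_{a'\neq a}(a'+b)$; the key observation which makes everything work is that multiplying by $(a+b)$ converts this sub-product into the $a$-independent quantity $P$, so that the scalar separates from the sum and we get genuine constants $C_\infty,C_0$.
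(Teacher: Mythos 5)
Your proposal is correct and follows essentially the same route as the paper: substitute $x\mapsto \frac{1}{x}+b$ into the factorization of $HP(x;A,d)$ from Lemma 4, multiply by $x^{\alpha+d-1}$ using $d=\alpha\beta$ and $(a+b)^d=1$, and then convert $c_a(A)(a+b)^{\alpha-1}$ into $c_{1/(a+b)}(A_b)(a+b)$ via the explicit product formula of Lemma 1. Your version is in fact slightly more careful than the paper's in tracking the sign $(-1)^{\alpha-1}$ coming from $\prod_{a'\neq a}(a'-a)$ versus $\prod_{a'\neq a}(a-a')$, which is harmless here since the lemma only asserts the existence of some constants $C_0,C_\infty$.
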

\begin{proof}
Substitution $x\mapsto \frac{1}{x}+b$ shows that
\[
HP\left(\frac{1}{x}+b;A,d\right)=C\prod_{b'\in B}\left(\frac{1}{x}+b-b'\right)^{\alpha}
\]
On the right, we get
\[
C_1x^{-\alpha\beta}\prod_{b'\in B\setminus\{b\}}\left(x+\frac{1}{b-b'}\right)^\alpha,
\]
where
\[
C_1=C\prod_{b'\in B\setminus\{b\}}(b-b')^{\alpha}.
\]
On the left, we obtain
\[
HP\left(\frac{1}{x}+b;A,d\right)=\sum_{a\in A}c_a(A)\left(\frac{1}{x}+a+b\right)^{\alpha+d-1}-1.
\]
Multiplying both sides by $x^{\alpha+d-1}$, using $\alpha\beta=d$ and $(a+b)^{\alpha+d-1}=(a+b)^{\alpha-1}$ for all $a\in A, b\in B$, we conclude that
\[
\sum_{a\in A}c_a(A)(a+b)^{\alpha-1}\left(x+\frac{1}{a+b}\right)^{\alpha+d-1}=
\]
\begin{equation}
\label{AplusB}
=x^{\alpha+d-1}+C_1x^{\alpha-1}\prod_{b'\in B\setminus\{b\}}\left(x+\frac{1}{b-b'}\right)^{\alpha}
\end{equation}

Now we need to relate the coefficients $c_a(A)$ to $c_{\frac{1}{a+b}}(A_b)$. To do so, notice that for any $a\in A$
\[
c_{\frac{1}{a+b}}(A_b)=\frac{1}{\prod\limits_{a'\in A\setminus{a}}\left(\frac{1}{a+b}-\frac{1}{a'+b}\right)}
\]
by Lemma 1. But $\frac{1}{a+b}-\frac{1}{a'+b}=\frac{a'-a}{(a+b)(a'+b)}$. Therefore,
\[
c_{\frac{1}{a+b}}(A_b)=\frac{\prod\limits_{a'\in A\setminus\{a\}}(a+b)(a'+b)}{\prod\limits_{a'\in A\setminus{a}}(a'-a)}=C_\infty c_a(A)(a+b)^{\alpha-2},
\]
where
\[
C_\infty=\prod_{a'\in A}(a'+b).
\]
Note: the power of $(a+b)$ is $\alpha-2$, because this factor appears $\alpha-1$ times in the numerator and the same number of times in the product $C_\infty(a+b)^{\alpha-2}$. In conclusion, we have
\[
\sum_{a\in A}c_a(A)(a+b)^{\alpha-1}\left(x+\frac{1}{a+b}\right)^{\alpha+d-1}=C_\infty^{-1}\sum_{a\in A}c_{\frac{1}{a+b}}(A_b)(a+b)\left(x+\frac{1}{a+b}\right)^{\alpha+d-1}.
\]
Multiplying \ref{AplusB} by $C_\infty$, we derive the Lemma 9 with $C_0=C_1C_\infty$.
\end{proof}

Now we use Lemma 9 to obtain two important relations.
\begin{lemma}
For any $b\in B$ we have
\begin{equation}
\label{relx}
\tag{Relation X}
\sum_{a\in A}\frac{1}{a+b}=\frac{(\alpha+1)\alpha}{d-1}\sum_{b'\in B\setminus\{b\}}\frac{1}{b-b'}.    
\end{equation}

\begin{equation}
\label{rely}  
\tag{Relation Y}
\left(\sum_{a\in A}\frac{1}{a+b}\right)^2+\sum_{a\in A}\frac{1}{(a+b)^2}=\frac{\alpha(\alpha+1)(\alpha+2)}{(d-1)(d-2)}\left(\alpha\left(\sum_{b'\in B\setminus\{b\}}\frac{1}{b-b'}\right)^2-\sum_{b'\in B\setminus\{b\}}\frac{1}{(b-b')^2}\right)
\end{equation}
\end{lemma}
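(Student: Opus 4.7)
The plan is to derive both identities by expanding the polynomial identity from Lemma~9 and comparing coefficients at the three specific powers $x^{d-1}$, $x^{d-2}$, and $x^{d-3}$. First, introduce the auxiliary set $A_b=\{1/(a+b):a\in A\}$, which has exactly $\alpha$ elements since $B\cap(-A)=\emptyset$. Expanding the left-hand side binomially gives
$$\sum_{k=0}^{\alpha+d-1}\binom{\alpha+d-1}{k}x^k\sum_{t\in A_b}c_t(A_b)\,t^{\alpha+d-2-k},$$
and applying Lemma~2 to $A_b$, the inner sum equals $h_{d-1-k}(A_b)$ for $0\le k\le d-1$, while for $d\le k\le d+\alpha-2$ it vanishes by the defining orthogonality of the $c_t(A_b)$.

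The right-hand side of Lemma~9 has the term $C_\infty x^{\alpha+d-1}$ isolated at the top, and $C_0 x^{\alpha-1}\prod_{b'\neq b}(x+1/(b-b'))^\alpha$ of degree exactly $d-1$. Matching the $x^{d-1}$ coefficient (with $h_0(A_b)=1$) immediately identifies $C_0=\binom{\alpha+d-1}{d-1}$. Matching the $x^{d-2}$ coefficient, the LHS contributes $\binom{\alpha+d-1}{d-2}\sum_a 1/(a+b)$ via $h_1(A_b)$, while the RHS contributes $C_0\cdot\alpha\sum_{b'\neq b}1/(b-b')$ (the subleading coefficient of the product). Using the binomial ratio $\binom{\alpha+d-1}{d-2}/\binom{\alpha+d-1}{d-1}=(d-1)/(\alpha+1)$ produces Relation~X after rearrangement.

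For Relation~Y I match the $x^{d-3}$ coefficient. On the LHS, Lemma~2 gives $h_2(A_b)=\tfrac{1}{2}(p_1(A_b)^2+p_2(A_b))$, which translates into the combination appearing in Relation~Y. On the RHS, the coefficient of $x^{d-\alpha-2}$ in $\prod_{b'\neq b}(x+1/(b-b'))^\alpha$ is the second elementary symmetric function of the multiset that lists each root $\alpha$ times; by Newton's identity this equals $\tfrac{\alpha}{2}\bigl(\alpha(\sum_{b'\neq b}1/(b-b'))^2-\sum_{b'\neq b}1/(b-b')^2\bigr)$. Dividing through by $C_0$ and using the ratio $\binom{\alpha+d-1}{d-3}/\binom{\alpha+d-1}{d-1}=(d-1)(d-2)/((\alpha+1)(\alpha+2))$ yields Relation~Y.

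The computation is largely mechanical, so the main subtlety I expect is simply verifying the compatibility of the two sides: the LHS must have vanishing coefficients at $x^d,\dots,x^{d+\alpha-2}$, precisely matching the ``gap'' between the two non-trivial pieces on the RHS. This cancellation is not accidental but is exactly encoded in the orthogonality conditions defining $c_t(A_b)$, and is what makes $C_0$ well-defined as a scalar. Once this compatibility is in place, peeling off the coefficients at $x^{d-1}$, $x^{d-2}$, and $x^{d-3}$ yields $C_0$, Relation~X, and Relation~Y with only routine bookkeeping of binomial ratios.
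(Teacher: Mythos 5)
Your proposal is correct and follows essentially the same route as the paper: both extract $C_0=\binom{\alpha+d-1}{\alpha}$ from the $x^{d-1}$ coefficient of the Lemma~9 identity and then read off Relations X and Y from the $x^{d-2}$ and $x^{d-3}$ coefficients, using Lemma~2 to identify the left-hand sums as $h_1(A_b)$ and $h_2(A_b)$ and the same binomial-coefficient ratios. The only cosmetic difference is that you package the right-hand $x^{d-3}$ coefficient as the second elementary symmetric function of the $\alpha$-fold multiset of roots, where the paper writes out the sum over pairs $b'\neq b''$ explicitly; these are the same computation.
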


The Relation X will be mostly used in this section and Relation Y will only be used in the next one.
\begin{proof}
Due to Lemma 9, we have
\[
\sum_{a\in A}c_{\frac{1}{a+b}}(A_b)(a+b)\left(x+\frac{1}{a+b}\right)^{\alpha+d-1}=C_\infty x^{\alpha+d-1}+C_0x^{\alpha-1}\prod_{b'\in B\setminus\{b\}}\left(x+\frac{1}{b-b'}\right)^\alpha.
\]
The coefficient $C_0$ is equal to the $(d-1)$-th coefficient of the left-hand side. Thus, by binomial formula we get
\[
C_0=\sum_{a\in A}c_{\frac{1}{a+b}}(A_b)(a+b)\left(\frac{1}{a+b}\right)^\alpha{\alpha+d-1\choose \alpha}={\alpha+d-1\choose \alpha}.
\]
To establish Relations X and Y we need two more coefficients. The coefficient of $x^{d-2}$ on the right is
\[
C_0\alpha\sum_{b'\in B\setminus\{b\}}\frac{1}{b-b'}.
\]
On the left, it is
\[
{\alpha+d-1\choose \alpha+1}\sum_{a\in A}c_{\frac{1}{a+b}}(A_b)\left(\frac{1}{a+b}\right)^\alpha.
\]
Applying Lemma 2 for $m=1$, we see that this is
\[
{\alpha+d-1\choose \alpha+1}\sum_{a\in A}\frac{1}{a+b}.
\]
Therefore,
\[
\sum_{a\in A}\frac{1}{a+b}=\frac{\alpha C_0}{{\alpha+d-1\choose \alpha+1}}\sum_{b'\in B\setminus\{b\}}\frac{1}{b-b'}.
\]
To complete the proof of Relation X, notice that
\[
\frac{\alpha C_0}{{\alpha+d-1\choose \alpha+1}}=\frac{\frac{\alpha(\alpha+d-1)!}{\alpha!(d-1)!}}{\frac{(\alpha+d-1)!}{(\alpha+1)!(d-2)!}}=\frac{\alpha(\alpha+1)!(d-2)!}{\alpha!(d-1)!}=\frac{\alpha(\alpha+1)}{d-1}.
\]
The coefficient of $x^{d-3}$ on the right is
\[
C_0\left(\alpha^2\sum_{b'\neq b'', b',b''\in B\setminus\{b\}}\frac{1}{(b-b')(b-b'')}+{\alpha \choose 2}\sum_{b'\in B\setminus\{b\}}\frac{1}{(b-b')^2}\right).
\]
Next, the first sum inside the brackets can be expressed as
\[
\sum_{b'\neq b'', b',b''\in B\setminus\{b\}}\frac{1}{(b-b')(b-b'')}=\frac12\left(\left(\sum_{b\in B\setminus\{b\}}\frac{1}{b-b'}\right)^2-\sum_{b\in B\setminus\{b\}}\frac{1}{(b-b')^2}\right).
\]
On the other hand, $-\frac{\alpha^2}{2}+{\alpha \choose 2}=-\frac{\alpha}{2}$, so collecting the terms we get
\[
\frac{C_0\alpha}{2}\left(\alpha\left(\sum_{b'\in B\setminus\{b\}}\frac{1}{b-b'}\right)^2-\sum_{b'\in B\setminus\{b\}}\frac{1}{(b-b')^2}\right).
\]
Then, on the left the coefficient at $x^{d-3}$ is
\[
{\alpha+d-1\choose \alpha+2}\sum_{a\in A}c_{\frac{1}{a+b}}(A_b)\left(\frac{1}{a+b}\right)^{\alpha+1}.
\]
Lemma 2 now gives
\[
\sum_{a\in A}c_{\frac{1}{a+b}}(A_b)\left(\frac{1}{a+b}\right)^{\alpha+1}=h_2(A_b)=\frac12\left(\left(\sum_{a\in A}\frac{1}{a+b}\right)^2+\sum_{a\in A}\frac{1}{(a+b)^2}\right).
\]
Comparing, we get
\[
\left(\sum_{a\in A}\frac{1}{a+b}\right)^2+\sum_{a\in A}\frac{1}{(a+b)^2}=\frac{C_0\alpha}{{\alpha+d-1\choose \alpha+2}}\left(\alpha\left(\sum_{b'\in B\setminus\{b\}}\frac{1}{b-b'}\right)^2-\sum_{b'\in B\setminus\{b\}}\frac{1}{(b-b')^2}\right).
\]
Next, we see that
\[
\frac{C_0\alpha}{{\alpha+d-1\choose \alpha+2}}=\frac{\frac{\alpha(\alpha+d-1)!}{\alpha!(d-1)!}}{\frac{(\alpha+d-1)!}{(\alpha+2)!(d-3)!}}=\frac{\alpha(\alpha+2)!(d-3)!}{\alpha!(d-1)!}=\frac{\alpha(\alpha+1)(\alpha+2)}{(d-1)(d-2)},
\]
which concludes the proof.
\end{proof}

Before the application of Lemma 10, we need to introduce the notion of differential forms and their residues. Some further formulas are obtained using differential forms. They also can be established combinatorially, but this approach is somewhat easier for our goals.

\begin{definition}
Let $K$ be a field. A differential form on $\mathbb P^1(K)$ is an expression $f(x)dx$, where $f(x)$ is a rational function. For $b\neq \infty$ the residue
\[
\mathrm{Res}_{x=b}\,f(x)dx
\]
is the coefficient of $\frac{1}{x-b}$ in the Laurent expansion of $f(x)$ at $x=b$. The residue at infinity
\[
\mathrm{Res}_{x=\infty}f(x)dx
\]
is $-a_{1}$ if $f(x)$ has a Laurent expansion at infinity of the form
\[
f(x)=\sum_{n\geq -N}a_n x^{-n}.
\]
\end{definition}
The following reciprocity law is well known and admits vast generalizations (see, for example, \cite{Tat}).
\begin{lemma}[Sum of residues formula]
Let $K$ be an algebraically closed field and $\omega$ be a differential form on $\mathbb P^1(K)$. Then
\[
\sum_{y\in \mathbb P^1(K)}\mathrm{Res}_{x=y}\omega=0.
\]
\end{lemma}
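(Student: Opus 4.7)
The plan is to prove the formula by reducing, via linearity, to a basis of the space of rational functions. The total-residue map $\omega\mapsto\sum_{y\in\mathbb P^1(K)}\mathrm{Res}_{x=y}\,\omega$ is $K$-linear in $f$, so it suffices to verify vanishing on a $K$-basis of $K(x)$. Since $K$ is algebraically closed, partial fractions decompose every $f\in K(x)$ uniquely as a polynomial plus a finite sum of terms $c/(x-b)^n$ with $b\in K$, $n\ge 1$, $c\in K$. So I would check the identity separately for the basis elements $f(x)=x^n$ with $n\ge 0$ and $f(x)=1/(x-b)^n$ with $b\in K$, $n\ge 1$.

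For $f(x)=x^n$ with $n\ge 0$, all finite residues vanish since $f$ has no finite poles. At infinity the Laurent expansion is just $x^n$, i.e.\ $a_{-n}=1$ and all other $a_m=0$; in particular $a_1=0$, so $\mathrm{Res}_{x=\infty}\,\omega=0$. For $f(x)=1/(x-b)^n$ with $n\ge 2$, the Laurent expansion at $b$ is already $(x-b)^{-n}$, whose coefficient of $(x-b)^{-1}$ is zero; and the Laurent expansion at infinity, obtained from the binomial series $x^{-n}(1-b/x)^{-n}$, begins at $x^{-n}$, so again $a_1=0$. Finally, for $f(x)=1/(x-b)$, the residue at $b$ is $1$, while at infinity the expansion $x^{-1}+bx^{-2}+b^2x^{-3}+\ldots$ gives $a_1=1$, contributing $-1$; the two terms cancel.

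The only substantive point is the sign at infinity, which originates from the change of coordinate $u=1/x$ giving $dx=-du/u^2$; this sign is already baked into the paper's definition of $\mathrm{Res}_{x=\infty}$, so no further bookkeeping is needed. I therefore expect no real obstacle: this is a classical reciprocity whose proof in the purely algebraic setting is routine verification on three families of basis elements, and the identity on an arbitrary $\omega=f(x)dx$ follows by linearity.
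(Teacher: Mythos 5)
Your proof is correct. Note that the paper does not actually prove this lemma: it simply records it as a well-known reciprocity law and cites Tate's paper on residues of differentials on curves, so your argument is a self-contained elementary substitute rather than a variant of the paper's proof. The partial-fractions reduction is the standard way to verify the statement over an arbitrary algebraically closed field, and all three basis cases check out: polynomials contribute nothing anywhere, $(x-b)^{-n}$ with $n\geq 2$ has vanishing residue both at $b$ and at infinity (its expansion at infinity in $K((1/x))$ starts at $x^{-n}$, with only integer binomial coefficients appearing, so nothing goes wrong in positive characteristic), and $(x-b)^{-1}$ contributes $+1$ at $b$ and $-1$ at infinity. The only points worth making explicit in a written-up version are (i) that partial fraction decomposition over $K$ requires only that the denominator split into linear factors, which is guaranteed by algebraic closedness, and (ii) that the sum over $y\in\mathbb P^1(K)$ is finite because $f$ has finitely many poles, so the linearity argument is legitimate. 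What your approach buys is complete independence from the analytic or scheme-theoretic machinery behind the general residue theorem; what it gives up is the generality (higher-genus curves, non-closed fields via trace maps) that Tate's formulation provides, none of which is needed in this paper since every application is to explicit rational differential forms on $\mathbb P^1$ over $\overline{\mathbb F}_p$.
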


Notice that the sum above always contains only finitely many terms. Here we apply the Relation X of Lemmas 10 and 11 to prove the $\alpha=\beta$ theorem. First, we need a few computations.
\begin{lemma}
Suppose that $k$ is equal to either $n$ or $m$ (see the beginning of this section). Then
\[
\sum_{b\in B}b^{k+1}\sum_{b'\in B\setminus\{b\}}\frac{1}{b-b'}=p_k(B)\left(\beta-\frac{k+1}{2}\right)
\]
and
\[
\sum_{b\in B}b^{k+1}\sum_{a\in A}\frac{1}{a+b}+(-1)^k\sum_{a\in A}a^{k+1}\sum_{b\in B}\frac{1}{a+b}=\alpha p_k(B)+(-1)^k\beta p_k(A).
\]
\end{lemma}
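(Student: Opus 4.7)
My plan is to prove both identities by an elementary algebraic manipulation, reducing each left-hand side to a sum of products of power sums $p_i(\cdot)p_j(\cdot)$ with $i+j=k$, and then invoking the vanishing of the ``intermediate'' power sums established in the preamble to Lemma~8. The residue/differential-form machinery introduced just before Lemma~12, while surely useful later, seems unnecessary here.

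For the first identity I will swap the order of summation to write
\[
\sum_{b\in B}b^{k+1}\sum_{b'\in B\setminus\{b\}}\frac{1}{b-b'}=\sum_{\substack{b,b'\in B\\ b\neq b'}}\frac{b^{k+1}}{b-b'},
\]
and symmetrize: the decomposition $\frac{b^{k+1}}{b-b'}=\tfrac12\cdot\frac{b^{k+1}-(b')^{k+1}}{b-b'}+\tfrac12\cdot\frac{b^{k+1}+(b')^{k+1}}{b-b'}$ has its second summand antisymmetric under $b\leftrightarrow b'$ and therefore contributing nothing to the sum over ordered pairs. Expanding $(b^{k+1}-(b')^{k+1})/(b-b')=\sum_{j=0}^{k}b^{k-j}(b')^{j}$ then rewrites the whole expression as $\tfrac12\sum_{j=0}^{k}\bigl(p_{k-j}(B)p_j(B)-p_k(B)\bigr)$. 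For the second identity, the same finite-geometric-series factorization collapses the combined sum:
\[
\sum_{a\in A,b\in B}\frac{b^{k+1}+(-1)^k a^{k+1}}{a+b}=\sum_{a,b}\frac{b^{k+1}-(-a)^{k+1}}{b-(-a)}=\sum_{j=0}^{k}(-1)^{j}p_{k-j}(B)p_j(A).
\]

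The key step, and the only place the hypothesis $k\in\{n,m\}$ enters, is to show that only the boundary terms $j=0$ and $j=k$ survive in each of these sums. For $k=n$ this is immediate since $p_j(A)=p_j(B)=0$ for $0<j<n$. For $k=m$ I will reuse the argument of Lemma~8: a nonzero $p_j(A)$ or $p_j(B)$ with $0<j<m$ forces $n\mid j$ by minimality of $m=m(A)=m(B)$; so if both factors in a product $p_{j}(\cdot)p_{m-j}(\cdot)$ were nonzero with $0<j<m$, one would have $n\mid j$ and $n\mid(m-j)$, and therefore $n\mid m$, contradicting the definition of $m$. With only $j=0,k$ contributing, the two expressions above collapse to $\tfrac12(2\beta p_k(B)-(k+1)p_k(B))=p_k(B)(\beta-(k+1)/2)$ and $\alpha p_k(B)+(-1)^k\beta p_k(A)$, as claimed.

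I expect the only real obstacle to be keeping the minimality bookkeeping straight: one must verify that in each product $p_{j}(X)p_{k-j}(Y)$ (with $X,Y\in\{A,B\}$) both indices genuinely lie in the open range $(0,m)$ where the minimality hypothesis applies, and that no degenerate case has been mislaid. Once that is in hand the proof reduces to the two-line manipulation of the preceding paragraph.
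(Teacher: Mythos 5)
Your proof is correct, and it takes a genuinely different route from the paper's. The paper derives both identities from the sum-of-residues formula (Lemma 11) applied to the differential forms $x^{k+1}\left(\frac{g'}{g}(x)\right)^2dx$ and $x^{k+1}\frac{g'}{g}(x)\frac{h'}{h}(x)dx$ with $g=\prod_{b\in B}(x-b)$, $h=\prod_{a\in A}(x+a)$: the residue at infinity yields the convolution $\sum_{r+s=k}p_r p_s$, while the finite residues yield the left-hand sides. Your symmetrization of $\sum_{b\neq b'}b^{k+1}/(b-b')$ combined with the factorization $(u^{k+1}-v^{k+1})/(u-v)=\sum_{j=0}^{k}u^{k-j}v^{j}$ lands on exactly the same convolution, including the diagonal correction $-(k+1)p_k(B)$ in the first identity, and your reduction to the boundary terms $j=0,k$ via the minimality of $n$ and of $m=m(A)=m(B)$ coincides with the paper's treatment of the residue at infinity (for $k=m$, both indices of a surviving interior product would be divisible by $n$, forcing $n\mid m$, a contradiction). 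Your version is more elementary and self-contained: it bypasses the residue formalism entirely, it needs no removal of a diagonal in the mixed sum because $B\cap(-A)=\emptyset$ guarantees $a+b\neq0$, and the division by $2$ is harmless since $p$ is odd. What the paper's machinery buys is reusability: the same residue template extends with little additional effort to the cubic and second-derivative forms of Lemmas 14--16, where a direct symmetrization would be substantially more laborious.
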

From this point on, we set
\[
g(x)=\prod_{b\in B}(x-b)
\]
and
\[
h(x)=\prod_{a\in A}(x+a).
\]
\begin{proof}[Proof of Lemma 12]
Define
\[
\omega_{2,0}^k=x^{k+1}\left(\frac{g'}{g}(x)\right)^2dx.
\]
Clearly, the only singluarities of $\omega_{2,0}^k$ are located at $x=b$ with $b\in B$ and infinity. Let us compute the residues. First, the infinity. We get
\[
\frac{g'}{g}(x)=\sum_{b\in B}\frac{1}{x-b}=\sum_{b\in B}\frac{x^{-1}}{1-bx^{-1}}=
\]
\[
=\sum_{l\geq 0}p_l(B)x^{-l-1}.
\]
Therefore,
\[
x^{k+1}\left(\frac{g'}{g}(x)\right)^2=\sum_{r,s\geq 0}p_r(B)p_s(B)x^{k-r-s-1}.
\]
We get
\[
\mathrm{Res}_{x=\infty}\omega_{2,0}^k=-\sum_{r+s=k}p_r(B)p_s(B).
\]
If $k=n$, the only summands with non-zero $p_r(B)$ are $p_0(B)p_k(B)$ and $p_k(B)p_0(B)$, therefore,
\[
\mathrm{Res}_{x=\infty}\omega_{2,0}^k=-2\beta p_k(B).
\]
For $k=m$, if $r+s=m$, then either $r$ or $s$ is not divisible by $n$, so we get the exact same formula.

Now, take $b\in B$. Around $b$ we get
\[
\frac{g'}{g}(x)=\frac{1}{x-b}+\sum_{b'\in B\setminus\{b\}}\frac{1}{x-b'}=\frac{1}{x-b}+\sum_{b'\in B\setminus\{b\}}\frac{1}{x-b+b-b'}=
\]
\[
=\frac{1}{x-b}+\sum_{b'\in B\setminus\{b\}}\frac{1}{b-b'}-(x-b)\sum_{b'\in B\setminus\{b\}}\frac{1}{(b-b')^2}+O((x-b)^2).
\]
Hence
\[
\left(\frac{g'}{g}(x)\right)^2=\frac{1}{(x-b)^2}+\frac{2}{x-b}\sum_{b'\in B\setminus\{b\}}\frac{1}{b-b'}+O(1).
\]
On the other hand,
\[
x^{k+1}=b^{k+1}+(k+1)b^k(x-b)+O((x-b)^2),
\]
so we get
\[
\mathrm{Res}_{x=b}\omega_{2,0}^k=(k+1)b^k+2b^{k+1}\sum_{b'\in B\setminus\{b\}}\frac{1}{b-b'}.
\]
Now, by sum of residues formula we get
\[
\sum_{b\in B}\left((k+1)b^k+2b^{k+1}\sum_{b'\in B\setminus\{b\}}\frac{1}{b-b'}\right)=-\mathrm{Res}_{x=\infty}\omega_{2,0}^k=2\beta p_k(B).
\]
Subtracting $(k+1)p_k(B)$ and dividing by $2$, we get the first relation of Lemma 12.

To establish the second relation, set
\[
\omega_{1,1}^k=\frac{g'}{g}(x)\frac{h'}{h}(x)dx.
\]
The singularities are now located at $B,-A$ and $\infty$. First, let us compute the residue at $\infty$. We have
\[
\frac{g'}{g}(x)=\sum_{l\geq 0}p_l(B)x^{-l-1}
\]
and similarly
\[
\frac{h'}{h}(x)=\sum_{l\geq 0}(-1)^lp_l(A)x^{-l-1}.
\]
Therefore,
\[
x^{k+1}\frac{g'}{g}(x)\frac{f'}{f}(x)=\sum_{r,s\geq 0}p_s(B)p_r(A)(-1)^rx^{k-r-s-1},
\]
so
\[
\mathrm{Res}_{x=\infty}\omega_{1,1}^k=-\sum_{r+s=k}p_s(B)p_r(A)(-1)^r.
\]
Arguing as before, we see that 
\[
\mathrm{Res}_{x=\infty}\omega_{1,1}^k=-\alpha p_k(B)-(-1)^k\beta p_k(A).
\]
Next, for $b\in B$ we obtain
\[
\mathrm{Res}_{x=b}\omega_{1,1}^k=b^{k+1}\frac{h'}{h}(b)=\sum_{a\in A}\frac{b^{k+1}}{a+b}
\]
and for $a\in A$
\[
\mathrm{Res}_{x=-a}\omega_{1,1}^k=(-1)^{k+1}a^{k+1}\sum_{b\in B}\frac{1}{-a-b}=(-1)^k\sum_{b\in B}\frac{a^{k+1}}{a+b}.
\]
Applying sum of residues formula, we arrive at the conclusion of this lemma.
\end{proof}

We are now in position to prove Theorem 2.
\begin{proof}[Proof of Theorem 2]
If $A+B=\mu_d$, then by Relation X from Lemma 10 we get
\[
\sum_{a\in A}\frac{1}{a+b}=\frac{(\alpha+1)\alpha}{d-1}\sum_{b'\in B\setminus\{b\}}\frac{1}{b-b'}
\]
Let $k$ be either $n$ or $m$. Multiplying both sides by $b^{k+1}$ and summing, we obtain
\[
\sum_{b\in B}\sum_{a\in A}\frac{b^{k+1}}{a+b}=\frac{(\alpha+1)\alpha}{d-1}\sum_{b\in B}b^{k+1}\sum_{b'\in B\setminus\{b\}}\frac{1}{b-b'}.
\]
Lemma 12 shows that the right-hand side of this formula is equal to $\frac{(\alpha+1)\alpha}{d-1}p_k(B)\left(\beta-\frac{k+1}{2}\right)$. Switching $A$ and $B$ in Lemma 12, we prove similarly,
\[
\sum_{a\in A}\sum_{b\in B}\frac{a^{k+1}}{a+b}=\frac{(\beta+1)\beta}{d-1}p_k(A)\left(\alpha-\frac{k+1}{2}\right).
\]
Multiplying this by $(-1)^k$ and summing, we get from the second equality of Lemma 12
\[
\alpha p_k(B)+(-1)^k\beta p_k(A)=\sum_{b\in B}b^{k+1}\sum_{a\in A}\frac{1}{a+b}+(-1)^k\sum_{a\in A}a^{k+1}\sum_{b\in B}\frac{1}{a+b}=
\]
\[
=\frac{(\alpha+1)\alpha}{d-1}p_k(B)\left(\beta-\frac{k+1}{2}\right)+(-1)^k\frac{(\beta+1)\beta}{d-1}p_k(A)\left(\alpha-\frac{k+1}{2}\right)
\]
By Lemma 8, we know that $\alpha p_k(B)+\beta p_k(A)=0$. Suppose first that $k$ is even, then on the left we get $0$ and on the right, using $\beta p_k(A)=-\alpha p_k(B)$ we get
\[
\frac{\alpha p_k(B)}{d-1}\left((\alpha+1)\left(\beta-\frac{k+1}{2}\right)-(\beta+1)\left(\alpha-\frac{k+1}{2}\right)\right).
\]
The first factor is non-zero in $\mathbb F_p$, hence
\[
(\beta-\alpha)\frac{k+3}{2}=(\alpha+1)\left(\beta-\frac{k+1}{2}\right)-(\beta+1)\left(\alpha-\frac{k+1}{2}\right)\equiv 0\pmod p
\]
and we get either $\alpha=\beta$ and the proof is complete, or $k+3\equiv 0\pmod p$. This would imply that $\frac{p-1}{2}\geq d\geq k\geq p-3$, so $p\leq 5$. If $p=2$ or $3$, then there are no choices for $d$ and if $p=5$, then $d=2$ and we necessarily have $\alpha=\beta=2$ (which is also impossible, since $\alpha\beta=d$).

Now, if $k$ is odd, then the left-hand side is $\alpha p_k(B)-\beta p_k(A)=2\alpha p_k(B)$. On the right, we obtain
\[
\frac{\alpha p_k(B)}{d-1}\left((\alpha+1)\left(\beta-\frac{k+1}{2}\right)+(\beta+1)\left(\alpha-\frac{k+1}{2}\right)\right).
\]
Dividing both sides by $\frac{\alpha p_k(B)}{d-1}$, we get
\[
2d-2\equiv 2\alpha\beta-(\alpha+\beta)\frac{k-1}{2}-k-1 \pmod p.
\]
Since $\alpha\beta=d$, we derive
\[
(k-1)(\alpha+\beta+2)\equiv 0\pmod p.
\]
Therefore, we must have $k=1$: otherwise, $\alpha+\beta\geq p-2$, but $\alpha+\beta\leq d\leq (p-1)/2$, which is a contradiction, because $p>3$. In particular, $n=1$ and $m$ does not exist. On the other hand, we can always shift $A$ by $t=-\frac{p_1(A)}{\alpha}$ and $B$ by $-t$ to get $p_1(A)=p_1(B)=0$, which will yield $n>1$ and a contradiction. This concludes the proof of the $\alpha=\beta$ theorem.
\end{proof}

From our proof we also derive one more useful result:
\begin{corollary}
If $p_1(A)=p_1(B)=0$, then $n$ and $m$ are even.
\end{corollary}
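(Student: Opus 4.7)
The plan is to reopen the odd-$k$ branch of the proof of Theorem 2 under the extra hypothesis $p_1(A)=p_1(B)=0$. That proof, combining Relation X with Lemma 12 applied to $k\in\{n,m\}$, produced the congruence
\[
(k-1)(\alpha+\beta+2)\equiv 0\pmod p
\]
whenever $k$ is odd, and then used a size estimate on $\alpha+\beta$ to conclude $k=1$. I would reuse exactly that estimate, now strengthened by the freshly proved equality $\alpha=\beta=\sqrt d$ from Theorem 2, to deduce the dichotomy ``$k=1$ or $k$ is even'' for any odd $k\in\{n,m\}$.

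First I would invoke Theorem 2 to replace $\alpha+\beta$ by $2\sqrt d$. Because $d\mid p-1$ and $d<p-1$, we have $d\le (p-1)/2$, hence $\alpha+\beta+2\le 2\sqrt{(p-1)/2}+2$. A short calculation (the inequality $(p-2)^2>2(p-1)$ reduces to $p^2-6p+6>0$) shows that this is strictly less than $p$ for every $p\ge 5$, while the cases $p\le 3$ admit no admissible $d$ at all. Therefore $\alpha+\beta+2$ is a unit of $\mathbb F_p$, and the congruence forces any odd $k\in\{n,m\}$ to be equal to $1$.

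It then remains to rule out $k=1$ under the hypothesis. Since $p_1(A)=0$, we have $n\ge 2$, so $n$ cannot equal $1$ and must be even. For $m$, we observe by the definition given before Lemma 8 that $n\nmid m$ while $n\mid n$, so $m\ne n$; combined with $m\ge n\ge 2$ this gives $m>1$, and consequently $m$ must be even as well.

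I expect no real obstacle here: all the heavy algebra — Lemma 12, the residue computations, and the coefficient bookkeeping culminating in Relation X — has already been done in the proof of Theorem 2. The corollary is a short bookkeeping step that promotes the dichotomy ``$k=1$ or $k$ is even'' to ``$k$ is even'' once the vanishing of the first power sums excludes the degenerate solution $k=1$.
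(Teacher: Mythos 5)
Your proof is correct and follows essentially the same route the paper intends when it says the corollary is derived ``from our proof'': the odd-$k$ branch of the proof of Theorem 2 gives $(k-1)(\alpha+\beta+2)\equiv 0\pmod p$ and hence $k=1$, which the hypothesis $p_1(A)=p_1(B)=0$ rules out for $n$ (so $n\ge 2$ is even), and $m>n\ge 2$ rules out for $m$. The only cosmetic difference is that you bound $\alpha+\beta+2<p$ via Theorem 2's conclusion $\alpha=\beta=\sqrt{d}$, whereas the paper's own argument uses the cruder estimate $\alpha+\beta\le d\le (p-1)/2$ already available inside that proof; both work.
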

In subsequent calculations we always assume $n>1$.

\section{Quadratic residues are additively irreducible}

In this section, we use Relation Y of Lemma 10 to prove S\'ark\"ozy's conjecture. Our goal is to establish the following quadratic relation between $\alpha$ and $k$, where $k=n$ or $m$. All the proofs use $\alpha=\beta$.
\begin{lemma}
If $k=n$ or $m$, then
\[
2(3k-2)(k-1)\alpha+(k+2)(k+3)\equiv 0\pmod p.
\]
\end{lemma}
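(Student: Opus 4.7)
The plan is to apply the strategy of the proof of Theorem 2 to Relation Y from Lemma 10 in place of Relation X. Concretely, I would multiply both sides of Relation Y by $b^{k+1}$ (or $b^{k+2}$ if that weight turns out more convenient) and sum over $b \in B$, turning the pointwise identity into a global one. Each side of the summed identity can then be evaluated via the sum-of-residues formula (Lemma 11) applied to well-chosen rational differential forms on $\mathbb{P}^1$.

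The natural forms are combinations of $g'/g$ and $h'/h$ together with their derivatives, weighted by an appropriate power of $x$. For the left-hand side $\sum_{b\in B} b^{k+1}(S_A(b)^2 + S_A^{(2)}(b))$ (where $S_A^{(j)}(b) = \sum_{a\in A} 1/(a+b)^j$), one uses $x^{k+1}(g'/g)(h'/h)^2\,dx$, whose residue at $b \in B$ is $b^{k+1}S_A(b)^2$, together with $x^{k+1}(g'/g)(h''/h)\,dx$, whose residue at $b \in B$ is $b^{k+1}(S_A(b)^2 - S_A^{(2)}(b))$. For the right-hand side $c'\sum_{b\in B} b^{k+1}(\alpha T_B(b)^2 - T_B^{(2)}(b))$, one uses $x^{k+1}(g'/g)^3\,dx$ and $x^{k+1}(g'/g)^2(h'/h)\,dx$. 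Each such form also has residues at $-a \in -A$ (which produce analogous sums over $A$) and at $\infty$, and by pairing each form with its swap analogue (exchanging $A \leftrightarrow B$, equivalently $g \leftrightarrow h$) one obtains a closed system of equations. The residue at infinity is tractable because $p_l(A) = p_l(B) = 0$ for $0 < l < k$ (by the definitions of $n$ and $m$), so only a handful of terms in the Laurent expansions at $\infty$ survive, and they contribute a clean multiple of $p_k(B)$.

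After substituting $\alpha = \beta$ (Theorem 2), $p_k(A) = -p_k(B)$ (Lemma 8), $S_A(b) = (\alpha/(\alpha-1))\,T_B(b)$ (Relation X applied with $\alpha = \beta$), and the linear-sum identities from Lemma 12, the system collapses to a single equation of the form $P(\alpha,k)\cdot p_k(B) \equiv 0 \pmod p$. Since $p_k(B) \neq 0$, the polynomial $P(\alpha,k)$ must vanish modulo $p$, and identifying it with $2(3k-2)(k-1)\alpha + (k+2)(k+3)$ completes the proof.

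The hardest part is that each individual differential form yields residues only in the combinations $T_B^2 - T_B^{(2)}$ or $S_A^2 - S_A^{(2)}$, so disentangling the individual quantities that actually appear on the two sides of Relation Y requires combining several residue identities together with the swap symmetry, while carefully tracking how the $k$-dependence introduced by the Taylor coefficients of the weight $x^{k+1}$ at $x = b$ and $x = -a$ conspires with the contributions from the residues at infinity to produce precisely the coefficients $2(3k-2)(k-1)$ and $(k+2)(k+3)$. A secondary technical issue concerns the case $k = m$: when $n$ divides $m - r$ for some small $r$, extra terms can appear in the residue at infinity from triples of power sums summing to $m - r$, and one must verify that these either vanish outright or get absorbed in the final cancellation that isolates the $p_k(B)$ coefficient.
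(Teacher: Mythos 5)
Your plan is essentially the paper's own proof: the paper likewise multiplies Relation Y by $b^{k+2}$, sums over $B$, evaluates the resulting sums by the sum-of-residues formula applied to the forms $x^{k+2}(g'/g)^3\,dx$, $x^{k+2}(g'/g)'(h'/h)\,dx$ and $x^{k+2}(g'/g)^2(h'/h)\,dx$ (your forms $(g'/g)(h'/h)^2$ and $(g'/g)(h''/h)$ are just the swapped combinations of these), and then uses Relation X, Lemma 12, $\alpha=\beta$, $p_k(A)=-p_k(B)$ and the $A\leftrightarrow B$ subtraction to isolate a multiple of $p_k(B)$; your handling of the $k=m$ case at infinity also matches the paper's observation that a triple summing to $m$ with all parts divisible by $n$ would force $n\mid m$. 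The one thing you leave unexecuted is the decisive final computation, which in the paper is carried out in $\mathbb Q[\alpha]/(2\alpha^2+1)$ using $d\equiv-1/2\pmod p$ to reduce everything to the linear form $2(3k-2)(k-1)\alpha+(k+2)(k+3)$.
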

To derive this relation, we first compute a few more sums similar to Lemma 12. Certain elements of $\mathbb F_p$ appear in the calculations and we denote them by $\gamma_0,\gamma_1,\ldots$
These gammas will depend on $\alpha$ and $k\in \{n,m\}$. We assume that $p\geq 19$, because Theorem 2 implies $p=2\alpha^2+1$ with $\alpha\geq 2$. In particular, $\alpha$ must be divisible by $3$, otherwise we would have $3\mid p$. Therefore, $\alpha\geq 3$ and $p\geq 2\cdot 3^2+1=19$.

Assume that $A+B=\mu_{\frac{p-1}{2}}$ for a prime $p$. Then $d=\frac{p-1}{2}\equiv -\frac12\pmod p$. Lemma 10 gives for any $b\in B$
\[
\sum_{a\in A}\frac{1}{a+b}=\gamma_0\sum_{b'\in B\setminus\{b\}}\frac{1}{b-b'}
\]
and
\[
\left(\sum_{a\in A}\frac{1}{a+b}\right)^2+\sum_{a\in A}\frac{1}{(a+b)^2}=\gamma_1\left(\alpha\left(\sum_{b'\in B\setminus\{b\}}\frac{1}{b-b'}\right)^2-\sum_{b'\in B\setminus\{b\}}\frac{1}{(b-b')^2}\right)
\]
with
\begin{equation}
\label{gamma0}
    \gamma_0=\frac{\alpha(\alpha+1)}{d-1}\equiv -\frac{2\alpha(\alpha+1)}{3} \pmod p.
\end{equation}

and
\begin{equation}
\label{gamma1}
    \gamma_1=\frac{\alpha(\alpha+1)(\alpha+2)}{(d-1)(d-2)}\equiv \frac{4\alpha(\alpha+1)(\alpha+2)}{15}\pmod p.
\end{equation}

We now need to compute residues for three more differential forms.
\begin{lemma}
For $k=n$ or $m$ we have
\[
\sum_{b\in B}b^{k+2}\left(\left(\sum_{b'\in B\setminus\{b\}}\frac{1}{b-b'}\right)^2-\sum_{b'\in B\setminus\{b\}}\frac{1}{(b-b')^2}\right)=\gamma_2p_k(B),
\]
where
\begin{equation}
\label{gamma2}
    \gamma_2=\gamma_2(k)=\alpha^2-(k+2)\alpha+\frac{(k+1)(k+2)}{3}.
\end{equation}

\end{lemma}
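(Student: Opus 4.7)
The plan is to mimic Lemma~12's strategy: apply the sum-of-residues formula to a carefully chosen rational differential form on $\mathbb{P}^{1}$. Since the target expression $\sigma_1(b)^{2}-\sigma_2(b)$ (with $\sigma_j(b):=\sum_{b'\neq b}(b-b')^{-j}$) is quadratic in $g'/g$, we need a form that is cubic in $\psi(x):=g'(x)/g(x)=\sum_{b\in B}(x-b)^{-1}$.

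The right choice, I claim, is
\[
\omega=x^{k+2}\bigl(\psi^{3}+\psi\psi'\bigr)\,dx.
\]
Since $A+B=\mu_d$ forces $(-A)\cap B=\emptyset$, $\psi$ is regular at every $-a\in -A$, and the only singularities of $\omega$ lie in $B\cup\{\infty\}$. Near $b\in B$, I would expand $\psi=(x-b)^{-1}+\sigma_1-\sigma_2(x-b)+O((x-b)^{2})$; the double poles of $\psi^{2}$ and $\psi'$ then cancel, giving $\psi^{2}+\psi'=2\sigma_1/(x-b)+(\sigma_1^{2}-3\sigma_2)+O(x-b)$. Multiplying by $\psi$ yields
\[
\psi^{3}+\psi\psi'=\frac{2\sigma_1}{(x-b)^{2}}+\frac{3(\sigma_1^{2}-\sigma_2)}{x-b}+O(1),
\]
so $\mathrm{Res}_{x=b}\,\omega=3b^{k+2}(\sigma_1^{2}-\sigma_2)+2(k+2)b^{k+1}\sigma_1$.

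For the residue at infinity, the Laurent expansion $\psi=\sum_{l\geq 0}p_l(B)x^{-l-1}$ makes the coefficient of $x^{-1}$ in $x^{k+2}(\psi^{3}+\psi\psi')$ equal to $\sum_{r+s+t=k}p_rp_sp_t-\sum_{r+s=k}(s+1)p_rp_s$. For $k\in\{n,m\}$, the definitions of $n$ and $m$, together with the observation that $p_{m-jn}=0$ for $j=1,2$ (since $n\nmid m$ forces $n\nmid m-jn$), restrict the surviving summands to those with all indices in $\{0,k\}$. The formulas collapse to $3\beta^{2}p_k(B)$ and $(k+2)\beta p_k(B)$ respectively, so $\mathrm{Res}_{x=\infty}\,\omega=\beta p_k(B)(k+2-3\beta)$.

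Setting the sum of residues to zero, substituting the identity $\sum_{b\in B}b^{k+1}\sigma_1(b)=p_k(B)\bigl(\beta-\tfrac{k+1}{2}\bigr)$ from Lemma~12, and invoking $\alpha=\beta$ (Theorem~2), I would arrive at
\[
3\sum_{b\in B}b^{k+2}(\sigma_1^{2}-\sigma_2)=p_k(B)\bigl[3\alpha^{2}-3(k+2)\alpha+(k+1)(k+2)\bigr],
\]
which after dividing by $3$ is the claimed formula for $\gamma_2 p_k(B)$. The main obstacle is \emph{finding} the combination $\psi^{3}+\psi\psi'$: related candidates such as $\psi^{3}$, $\psi^{3}-\psi\psi'$, or $x^{k+3}(\psi^{2}\pm\psi')$ either fail to suppress the $(x-b)^{-3}$ pole or produce $\sigma_1^{2}+c\sigma_2$ with $c\neq -1$ in the residue. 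Once the form is pinned down, the remaining bookkeeping at infinity---in particular ruling out mixed contributions like $(n,m-n)$ when $k=m$---is routine.
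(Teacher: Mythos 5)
Your proposal is correct and follows essentially the same route as the paper: the paper applies the sum-of-residues formula to $\omega_{3,0}^k=x^{k+2}\left(g'/g\right)^3dx$, absorbs the triple pole at each $b\in B$ via the second-order Taylor coefficient of $x^{k+2}$ (contributing $\tfrac{(k+1)(k+2)}{2}b^k$ to the residue), and then invokes the first identity of Lemma~12 together with $\alpha=\beta$, exactly as you do. Your only deviation is adding the correction term $\psi\psi'$ to cancel the $(x-b)^{-3}$ pole instead of tracking its contribution directly; this shifts some bookkeeping to the residue at infinity but yields the same $\gamma_2$.
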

\begin{proof}
Consider a differential form
\[
\omega_{3,0}^k=x^{k+2}\left(\frac{g'}{g}(x)\right)^3dx.
\]
Its poles are located at $B$ and $\infty$. First, let us compute the residue at $\infty$.

We have
\[
\frac{g'}{g}(x)=\sum_{l\geq 0}p_l(B)x^{-l-1},
\]
so
\[
\mathrm{Res}_{x=\infty}\omega_{3,0}^k=-\sum_{r+s+t=k}p_r(B)p_s(B)p_t(B).
\]
Arguing as in Lemma 14, we get
\[
\mathrm{Res}_{x=\infty}\omega_{3,0}^k=-3\alpha^2 p_k(B).
\]
On the other hand, if $b\in B$, then
\[
\frac{g'}{g}(x)=\frac{1}{x-b}+\sum_{b'\in B\setminus\{b\}}\frac{1}{b-b'}-(x-b)\sum_{b'\in B\setminus\{b\}}\frac{1}{(b-b')^2}+O((x-b)^2).
\]
Cubing, we see that
\[
\left(\frac{g'}{g}(x)\right)^3=\frac{1}{(x-b)^3}+\frac{3}{(x-b)^2}\sum_{b'\in B\setminus\{b\}}\frac{1}{b-b'}+\frac{3}{x-b}\left(\left(\sum_{b'\in B\setminus\{b\}}\frac{1}{b-b'}\right)^2-\sum_{b'\in B\setminus\{b\}}\frac{1}{(b-b')^2}\right)+O(1)
\]
Since
\[
x^{k+2}=b^{k+2}+(k+2)b^{k+1}(x-b)+\frac{(k+2)(k+1)}{2}b^k(x-b)^2+O((x-b)^3),
\]
we obtain
\[
\mathrm{Res}_{x=b}\omega_{3,0}^k=\frac{(k+2)(k+1)}{2}b^k+3(k+2)b^{k+1}\sum_{b'\in B\setminus\{b\}}\frac{1}{b-b'}+
\]
\[
+3b^{k+2}\left(\left(\sum_{b'\in B\setminus\{b\}}\frac{1}{b-b'}\right)^2-\sum_{b'\in B\setminus\{b\}}\frac{1}{(b-b')^2}\right).
\]
Applying sum of residues formula, we get
\[
\frac{(k+2)(k+1)}{2}p_k(B)+3(k+2)\sum_{b\in B}b^{k+1}\sum_{b'\in B\setminus\{b\}}\frac{1}{b-b'}+
\]
\[
+3\sum_{b\in B}b^{k+2}\left(\left(\sum_{b'\in B\setminus\{b\}}\frac{1}{b-b'}\right)^2-\sum_{b'\in B\setminus\{b\}}\frac{1}{(b-b')^2}\right)=3\alpha^2 p_k(B).
\]
By the first formula of Lemma 12, we have
\[
3(k+2)\sum_{b\in B}b^{k+1}\sum_{b'\in B\setminus\{b\}}\frac{1}{b-b'}=p_k(B)\left(3\alpha(k+2)-\frac{3(k+1)(k+2)}{2}\right).
\]
Moving the first and the second summand to the right and dividing by $3$, we arrive at the conclusion of Lemma 14.
\end{proof}

Denote also
\begin{equation}
\label{gamma3}
\gamma_3=\alpha-\frac{k+1}{2},
\end{equation}
then Lemma 12 states that
\[
\sum_{b\in B}b^{k+1}\sum_{b'\in B\setminus\{b\}}\frac{1}{b-b'}=\gamma_3 p_k(B).
\]

In the next Lemma, we deal with a differential form involving a second derivative.
\begin{lemma}
For $k=n$ or $m$ we have
\[
\sum_{a\in A,b\in B}\frac{b^{k+2}-a^{k+2}}{(a+b)^2}=\gamma_4p_k(B),
\]
where
\begin{equation}
\label{gamma4}
\gamma_4=(k+2)\gamma_0\gamma_3-k\alpha.
\end{equation}
\end{lemma}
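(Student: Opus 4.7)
The plan is to follow the template of Lemmas 12 and 14 and apply the sum of residues formula (Lemma 11) to a judiciously chosen differential form. Since the desired sum involves terms of the form $(a+b)^{-2}$, the natural candidate is
\[
\omega \;=\; x^{k+2}\left(\frac{h'}{h}\right)'(x)\,\frac{g'}{g}(x)\,dx.
\]
Its poles are simple poles at each $b\in B$ (coming from $g'/g$), double poles at each $x=-a$ with $a\in A$ (coming from $(h'/h)'$; note $-a\notin B$ since $0\notin A+B$), and a pole at infinity.

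I would compute the three residues separately. At $x=b$, the residue is just $b^{k+2}\cdot(h'/h)'(b)=-b^{k+2}\sum_{a}(a+b)^{-2}$. At $x=-a$, writing $(h'/h)'(x)=-(x+a)^{-2}+\phi_a'(x)$ with $\phi_a$ regular at $-a$, the residue reduces to $-\frac{d}{dx}\bigl[x^{k+2}(g'/g)(x)\bigr]\big|_{x=-a}$, and after invoking that $k$ is even (Corollary 3) this collapses to
\[
a^{k+2}\sum_{b}\frac{1}{(a+b)^{2}}\;-\;(k+2)\,a^{k+1}\sum_{b}\frac{1}{a+b}.
\]
For the residue at infinity I would expand $g'/g$ and $(h'/h)'$ as Laurent series in $1/x$; the minimality of $n$ and $m$ kills every cross term except the boundary indices $l\in\{0,k\}$, and using $\alpha=\beta$ (Theorem 2) together with $p_k(A)=-p_k(B)$ (Lemma 8) the residue at infinity simplifies to $-k\alpha\,p_k(B)$.

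Summing the three contributions via Lemma 11 and denoting the left-hand side of Lemma 15 by $S$ yields
\[
-S \;-\; (k+2)\sum_{a\in A,\,b\in B}\frac{a^{k+1}}{a+b} \;-\; k\alpha\,p_k(B) \;=\; 0.
\]
To finish, I would evaluate the remaining double sum by applying Relation X with the roles of $A$ and $B$ exchanged (legitimate because $\alpha=\beta$, so the constant $\gamma_0$ does not change), followed by the $A$-analog of the first identity in Lemma 12:
\[
\sum_{a,b}\frac{a^{k+1}}{a+b} \;=\; \gamma_0\sum_a a^{k+1}\sum_{a'\neq a}\frac{1}{a-a'} \;=\; \gamma_0\gamma_3\,p_k(A) \;=\; -\gamma_0\gamma_3\,p_k(B).
\]
Substituting gives $S=\bigl[(k+2)\gamma_0\gamma_3 - k\alpha\bigr]p_k(B)=\gamma_4\,p_k(B)$, as claimed.

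The main source of friction is the residue at $-a$: one must differentiate the product $x^{k+2}(g'/g)(x)$ and keep careful track of signs in $(-a)^{k+1}$ and $(-a)^{k+2}$. The parity assumption on $k$—guaranteed by Corollary 3 after normalizing so that $p_1(A)=p_1(B)=0$—is precisely what makes these signs cooperate, so that the $a^{k+1}$-contribution from the residue at $-a$ cancels cleanly against the output of Relation X. Every other ingredient (the Laurent expansion at infinity, the use of $\alpha=\beta$, the formal manipulation of $\gamma_0,\gamma_3$) is routine once this bookkeeping is set up correctly.
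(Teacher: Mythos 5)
Your proof is correct and is essentially the paper's own argument in mirror image: the paper applies the residue theorem to $\psi=x^{k+2}\bigl(\frac{g'}{g}\bigr)'\frac{h'}{h}\,dx$ (double poles on $B$, simple poles on $-A$), whereas you use $x^{k+2}\bigl(\frac{h'}{h}\bigr)'\frac{g'}{g}\,dx$, and all your residues and signs check out. The only extra ingredient your variant needs is the $A$--$B$ swapped form of Relation X and of the first identity of Lemma 12 (legitimate since $\alpha=\beta$ and $n$, $m$ are common to both sets), which the paper's orientation avoids by landing directly on $\sum_b b^{k+1}\sum_a\frac{1}{a+b}$.
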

\begin{proof}
Let
\[
\psi=x^{k+2}\left(\frac{g'}{g}(x)\right)'\frac{h'}{h}(x)dx.
\]
Its poles are located at $B$, $-A$ and $\infty$. First, we compute the residue at $\infty$.

We have
\[
\frac{h'}{h}(x)=\sum_{l\geq 0}(-1)^lp_l(A)x^{-l-1}
\]
and
\[
\left(\frac{g'}{g}(x)\right)'=-\sum_{l\geq 0}(l+1)p_l(B)x^{-l-2}.
\]
Therefore,
\[
\mathrm{Res}_{x=\infty}\psi=\sum_{r+s=k}(-1)^r(s+1)p_r(A)p_s(B).
\]
This sum evaluates to
\[
(-1)^kp_k(A)p_0(B)+(k+1)p_0(A)p_k(B).
\]
Note that Corollary 2 implies $(-1)^k=1$. Also, since $\alpha=\beta$ we have $p_k(A)=-p_k(B),$ so we derive
\[
\mathrm{Res}_{x=\infty}\psi=k\alpha p_k(B).
\]
Next, for $a\in A$ we have
\[
\mathrm{Res}_{x=-a}\psi=(-a)^{k+2}\left(\frac{g'}{g}\right)'(-a)=-a^{k+2}\sum_{b\in B}\frac{1}{(a+b)^2}.
\]
Finally, for $b\in B$ we have
\[
\left(\frac{g'}{g}\right)'(x)=-\frac{1}{(x-b)^2}+O(1),
\]
so
\[
\mathrm{Res}_{x=b}\psi=-\frac{\partial}{\partial x}(x^{k+2}\frac{h'}{h}(x))\vert_{x=b}=-(k+2)b^{k+1}\sum_{a\in A}\frac{1}{a+b}+b^{k+2}\sum_{a\in A}\frac{1}{(a+b)^2}.
\]
Applying sum of residues formula, we see that
\[
\sum_{a\in A, b\in B}\frac{b^{k+2}-a^{k+2}}{(a+b)^2}=(k+2)\sum_{b\in B}b^{k+1}\sum_{a\in A}\frac{1}{a+b}-k\alpha p_k(B).
\]
Relation X of Lemma 10 together with the first relation of Lemma 12 concludes the proof.
\end{proof}
The third and the final relation with differential forms involves a square of logarithmic derivative.
\begin{lemma}
For $k=n$ or $m$ we have
\[
\sum_{a\in A}a^{k+2}\left(\sum_{b\in B}\frac{1}{a+b}\right)^2+\frac{2}{\gamma_0}\sum_{b\in B}b^{k+2}\left(\sum_{a\in A}\frac{1}{a+b}\right)^2-\sum_{b\in B,a\in A}\frac{b^{k+2}}{(a+b)^2}=\gamma_5 p_k(B),
\]
where
\begin{equation}
\label{gamma5}
\gamma_5=\alpha^2-(k+2)\gamma_0\gamma_3.
\end{equation}
\end{lemma}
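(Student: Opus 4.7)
The plan is to mirror the strategy of Lemmas 14 and 15: construct a single differential form whose pole structure at $-A$, $B$, and $\infty$ reproduces the three sums on the left-hand side, then apply the sum of residues formula (Lemma 11) and simplify using Relation X and Lemma 12. The form I propose is
\[
\omega = x^{k+2}\left(\frac{g'}{g}(x)\right)^{2}\frac{h'}{h}(x)\,dx.
\]
Its residue at $x=-a\in -A$ automatically produces the first summand: since $h'/h$ has residue $1$ at $-a$ while $(g'/g)^2(-a) = \bigl(\sum_b 1/(a+b)\bigr)^2$ and $(-a)^{k+2}=a^{k+2}$ by the evenness of $k$ (Corollary 2), the residue is $a^{k+2}\bigl(\sum_b 1/(a+b)\bigr)^2$.

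At $x=b\in B$ I would write $g'/g = 1/(x-b) + \tilde g(x)$ with $\tilde g(x)=\sum_{b'\neq b}1/(x-b')$, so that
\[
(g'/g)^2 = \frac{1}{(x-b)^2} + \frac{2\tilde g(x)}{x-b} + \tilde g(x)^2.
\]
The double-pole piece multiplied by $x^{k+2}h'/h$ contributes the derivative of $x^{k+2}h'/h$ at $b$, namely $(k+2)b^{k+1}\sum_a 1/(a+b) - b^{k+2}\sum_a 1/(a+b)^2$; the simple-pole piece contributes $2\tilde g(b)\,b^{k+2}\sum_a 1/(a+b)$, which by Relation X equals $\frac{2}{\gamma_0}b^{k+2}\bigl(\sum_a 1/(a+b)\bigr)^2$; and the $\tilde g(x)^2$ piece is regular and contributes nothing. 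Summing over $b$ reproduces the second and third summands on the left-hand side of the lemma, together with a cross-term $(k+2)\sum_b b^{k+1}\sum_a 1/(a+b)$ that by Relation X and the first formula of Lemma 12 equals $(k+2)\gamma_0\gamma_3\,p_k(B)$.

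For the residue at infinity, Laurent expanding gives
\[
\mathrm{Res}_{x=\infty}\omega = -\sum_{r+s+t=k} p_r(B)\,p_s(B)\,(-1)^t p_t(A).
\]
For both $k=n$ and $k=m$, the vanishing of small power sums—combined, in the $k=m$ case, with $n\nmid m$, which precludes any triple of positive multiples of $n$ summing to $m$—forces the only surviving triples to be the three permutations of $(k,0,0)$; with $(-1)^k=1$ and $p_k(A)=-p_k(B)$ (Lemma 8 with $\alpha=\beta$) this collapses to $-\alpha^2 p_k(B)$. Applying the sum of residues formula and rearranging then produces the claimed identity with $\gamma_5 = \alpha^2 - (k+2)\gamma_0\gamma_3$. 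The main obstacle is the Laurent-expansion bookkeeping near $b\in B$—in particular, correctly extracting the factor $2$ in the simple-pole coefficient of $(g'/g)^2$ and confirming that the regular $\tilde g(x)^2$ piece drops out; the infinity-residue calculation in the $k=m$ case is a secondary careful step.
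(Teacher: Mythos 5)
Your proposal is correct and follows essentially the same route as the paper: the paper's proof of this lemma uses exactly the form $\omega_{2,1}^k=x^{k+2}\left(\frac{g'}{g}(x)\right)^2\frac{h'}{h}(x)\,dx$, computes the same residues at $-A$, $B$, and $\infty$ (with the same reduction of the simple-pole cross-term via Relation X and of $\sum_b b^{k+1}\sum_a\frac{1}{a+b}=\gamma_0\gamma_3p_k(B)$ via Lemma 12), and assembles them by the sum-of-residues formula. The bookkeeping steps you flag as the main obstacles all go through exactly as you describe.
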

\begin{proof}
Consider a form
\[
\omega_{2,1}^k=x^{k+2}\left(\frac{g'}{g}(x)\right)^2\frac{h'}{h}(x)dx.
\]
Similarly to the previous computations, at $\infty$ we get
\[
\mathrm{Res}_{x=\infty}\omega_{2,1}^k=-\sum_{r+s+t=k}p_r(B)p_s(B)(-1)^tp_t(A).
\]
The only non-zero summands in the sum above correspond to $(0,0,k), (0,k,0)$ and $(k,0,0)$ hence we get
\[
\mathrm{Res}_{x=\infty}\omega_{2,1}^k=-2\alpha^2 p_k(B)-(-1)^k \alpha^2 p_k(A)=-\alpha^2 p_k(B),
\]
because $p_k(A)=-p_k(B)$ and $k$ is even.

Next, at $x=-a$ the residue is
\[
\mathrm{Res}_{x=-a}\omega_{2,1}^k=(-a)^{k+2}\left(\frac{g'}{g}(-a)\right)^2=a^{k+2}\left(\sum_{b\in B}\frac{1}{a+b}\right)^2.
\]
The expansion around $x=b$ gives
\[
\left(\frac{g'}{g}(x)\right)^2=\frac{1}{(x-b)^2}+\frac{2}{x-b}\sum_{b'\in B\setminus\{b\}}\frac{1}{b-b'}+O(1),
\]
so
\[
\mathrm{Res}_{x=b}\omega_{2,1}^k=2b^{k+2}\sum_{b'\in B\setminus\{b\}}\frac{1}{b-b'}\sum_{a\in A}\frac{1}{a+b}+(k+2)b^{k+1}\sum_{a\in A}\frac{1}{a+b}-b^{k+2}\sum_{a\in A}\frac{1}{(a+b)^2}.
\]
Using the relation X, we see that
\[
\sum_{b'\in B\setminus\{b\}}\frac{1}{b-b'}\sum_{a\in A}\frac{1}{a+b}=\frac{1}{\gamma_0}\left(\sum_{a\in A}\frac{1}{a+b}\right)^2.
\]
From the formula
\[
\sum_{b\in B}b^{k+1}\sum_{a\in A}\frac{1}{a+b}=\gamma_0\gamma_3p_k(B)
\]
and sum of residues formula we get the stated result.
\end{proof}

Note that $\alpha$ and $\alpha+1$ are invertible $\mod p$, so the expression $\frac{1}{\gamma_0}$ makes sense. We now use Lemmas 14, 15, 16 and Relation Y of Lemma 10 to prove Lemma 13.
\begin{proof}[Proof of Lemma 13:]
Multiplying Relation Y by $b^{k+2}$ and summing over $B$, we get
\[
\sum_{b\in B}b^{k+2}\left(\sum_{a\in A}\frac{1}{a+b}\right)^2+\sum_{a\in A,b\in B}\frac{b^{k+2}}{(a+b)^2}=
\]
\[=\gamma_1\left(\alpha\sum_{b\in B}b^{k+2}\left(\sum_{b'\in B\setminus\{b\}}\frac{1}{b-b'}\right)^2-\sum_{b\in B}b^{k+2}\sum_{b'\in B\setminus\{b\}}\frac{1}{(b-b')^2}\right)
\]
Lemma 14 implies that

\[
\sum_{b\in B}b^{k+2}\sum_{b'\in B\setminus\{b\}}\frac{1}{(b-b')^2}=\sum_{b\in B}b^{k+2}\left(\sum_{b'\in B\setminus\{b\}}\frac{1}{b-b'}\right)^2-\gamma_2p_2(B).
\]
Therefore, on the right we get
\[
\gamma_1(\alpha-1)\sum_{b\in B}b^{k+2}\left(\sum_{b'\in B\setminus\{b\}}\frac{1}{b-b'}\right)^2+\gamma_1\gamma_2 p_k(B).
\]
Relation X implies that
\[
\sum_{b\in B}b^{k+2}\left(\sum_{b'\in B\setminus\{b\}}\frac{1}{b-b'}\right)^2=\frac{1}{\gamma_0^2}\sum_{b\in B}b^{k+2}\left(\sum_{a\in A}\frac{1}{a+b}\right)^2, 
\]
which results in the formula
\[
\left(1-\frac{\gamma_1(\alpha-1)}{\gamma_0^2}\right)\sum_{b\in B}b^{k+2}\left(\sum_{a\in A}\frac{1}{a+b}\right)^2+\sum_{a\in A,b\in B}\frac{b^{k+2}}{(a+b)^2}=\gamma_1\gamma_2 p_k(B).
\]
In this formula, we can swap $A$ and $B$. Then, using $p_k(A)=-p_k(B)$ and subtracting, we get
\[
\left(1-\frac{\gamma_1(\alpha-1)}{\gamma_0^2}\right)\left(\sum_{b\in B}b^{k+2}\left(\sum_{a\in A}\frac{1}{a+b}\right)^2-\sum_{a\in A}a^{k+2}\left(\sum_{b\in B}\frac{1}{a+b}\right)^2\right)+
\]
\[
+\sum_{a\in A,b\in B}\frac{b^{k+2}-a^{k+2}}{(a+b)^2}=2\gamma_1\gamma_2p_k(B)
\]
The second summand is evaluated in Lemma 15, consequently,
\[
\left(1-\frac{\gamma_1(\alpha-1)}{\gamma_0^2}\right)\left(\sum_{b\in B}b^{k+2}\left(\sum_{a\in A}\frac{1}{a+b}\right)^2-\sum_{a\in A}a^{k+2}\left(\sum_{b\in B}\frac{1}{a+b}\right)^2\right)=(2\gamma_1\gamma_2-\gamma_4)p_k(B)
\]
Next, rewriting Lemma 16 with swapped sets and subtracting, we see that
\[
\left(\frac{2}{\gamma_0}-1\right)\left(\sum_{b\in B}b^{k+2}\left(\sum_{a\in A}\frac{1}{a+b}\right)^2-\sum_{a\in A}a^{k+2}\left(\sum_{b\in B}\frac{1}{a+b}\right)^2\right)=(2\gamma_5+\gamma_4)p_k(B).
\]
Substituting in the above formula and dividing by $p_k(B)$, we finally arrive at relation between gammas, namely
\[
\left(1-\frac{\gamma_1(\alpha-1)}{\gamma_0^2}\right)\left(\frac{2}{\gamma_0}-1\right)^{-1}(2\gamma_5+\gamma_4)=2\gamma_1\gamma_2-\gamma_4.
\]
The quantity $\frac{2}{\gamma_0}-1=-\frac{\alpha^2+\alpha+3}{\alpha(\alpha+1)}$ is invertible, because numerator is at most $\frac{p}{2}+\sqrt{\frac{p}{2}}+3<p$.

All the involved expressions are rational functions in $\alpha$ and $k$ and we have $\alpha^2=d\equiv -\frac{1}{2}\pmod p$, so everything here can be expressed in the form $u\alpha+v$ with $u,v\in \mathbb Q(k)$. This can be done if we work in the ring $\mathbb Q[\alpha]/(2\alpha^2+1)$.

Direct computation then shows that
\[
\left(1-\frac{\gamma_1(\alpha-1)}{\gamma_0^2}\right)\left(\frac{2}{\gamma_0}-1\right)^{-1}(2\gamma_5+\gamma_4)\equiv \left(\frac{2}{15}k^2 + \frac{14}{15}k + \frac{8}{15}\right)\alpha -\frac{1}{15}k^2 - \frac{1}{15}k + \frac{8}{15} \pmod p
\]
and
\[
2\gamma_1\gamma_2-\gamma_4\equiv \left(-\frac{1}{15}k^2 + \frac{19}{15}k + \frac{2}{5}\right)\alpha-\frac{1}{10}k^2 - \frac{7}{30}k + \frac{1}{3} \pmod p.
\]
Subtracting and multiplying by $30$, we arrive at the congruence
\[
(6k^2-10k+4)\alpha+(k^2+5k+6)\equiv 0 \pmod p,
\]
which is the congruence stated in Lemma 13.

Let us now explain this computation a bit more. First of all,
\[
\frac{2}{\gamma_0}-1=-\frac{\alpha^2+\alpha+3}{\alpha(\alpha+1)}\equiv -\frac{5/2+\alpha}{-1/2+\alpha} \pmod p,
\]
hence
\[
\left(\frac{2}{\gamma_0}-1\right)^{-1}\equiv -\frac{\alpha-1/2}{5/2+\alpha}=-\frac{(-1/2+\alpha)(5/2-\alpha)}{25/4-\alpha^2}\equiv -\frac{-5/4+3\alpha-\alpha^2}{25/4+1/2} \pmod p.
\]
Therefore, we get
\[
\left(\frac{2}{\gamma_0}-1\right)^{-1}\equiv -\frac{4}{9}\alpha+\frac{1}{9} \pmod p.
\]
Similarly,
\[
\frac{1}{\gamma_0} \equiv -\frac{3}{2\alpha-1}\equiv -\frac{3(2\alpha+1)}{4\alpha^2-1}\equiv 2\alpha+1 \pmod p.
\]
The rest is a polynomial computation not involving any division by non-constant polynomials. For example,
\[
\left(1-\frac{\gamma_1(\alpha-1)}{\gamma_0^2}\right)\left(\frac{2}{\gamma_0}-1\right)^{-1}\equiv 
\]
\[\equiv\left(1-\frac{4(\alpha-1)\alpha(\alpha+1)(\alpha+2)(2\alpha+1)^2}{15}\right)\left(-\frac49\alpha+\frac19\right) \pmod p.
\]
The last expression equals
\[
\frac{64}{135}\alpha^7 + \frac{176}{135}\alpha^6 + \frac{32}{135}\alpha^5 - \frac{4}{3}\alpha^4 - \frac{104}{135}\alpha^3 + \frac{4}{135}\alpha^2 - \frac{52}{135}\alpha + \frac{1}{9}\equiv -\frac25\pmod p.
\]
Here one can clearly see that all the odd powers of $\alpha$ cancel out.
Remaining computations are carried out similarly.
\end{proof}

As a consequence of Lemma 13 we prove
\begin{lemma}
If $\alpha>9$, then the number $m$ does not exist.
\end{lemma}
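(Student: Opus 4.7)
The plan is to apply Lemma~13 to both $k=n$ and $k=m$, view them as the two roots of the resulting quadratic (so Vieta's formulas apply), and then reduce the resulting integer identity modulo $6\alpha+1$ in order to restrict $\alpha$ to an explicit short list of candidates, all of them at most $9$.

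First I would rewrite Lemma~13 as
\[
(6\alpha+1)k^{2}+(5-10\alpha)k+(4\alpha+6)\equiv 0\pmod p.
\]
Since $0<m-n<\alpha<\sqrt{p}$, the residues of $n$ and $m$ modulo $p$ are distinct, so they are precisely the two roots of this quadratic. Vieta's sum formula gives
\[
(6\alpha+1)(n+m)\equiv 10\alpha-5\pmod p,
\]
so there is an integer $k_{1}$ with $(6\alpha+1)(n+m)=10\alpha-5+k_{1}p$. After the standard shift that makes $p_{1}(A)=p_{1}(B)=0$, Corollary~3 forces $n$ and $m$ to be even; combined with $n>1$, $m>n$ and $n\nmid m$ this yields $n\ge 4$ and $m\ge 6$, and hence $10\le n+m\le 2\alpha-2$. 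Substituting this range into the integer identity shows $k_{1}\in\{1,2,3,4,5\}$ for every $\alpha\ge 10$.

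The second step is to reduce the identity modulo $M:=6\alpha+1$. Using $6\alpha\equiv -1\pmod M$ one checks directly that $36p\equiv 38\pmod M$ and $36(10\alpha-5)\equiv -240\pmod M$, whence
\[
36\bigl[(10\alpha-5)+k_{1}p\bigr]\equiv 38k_{1}-240\pmod M.
\]
Because $M\equiv 1\pmod 6$ we have $\gcd(36,M)=1$, and since $M$ is odd this simplifies to the clean divisibility $M\mid 19k_{1}-120$.

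The proof then finishes with a finite check: for $k_{1}\in\{1,2,3,4,5\}$ the quantity $|19k_{1}-120|$ takes the values $101,\,82,\,63,\,44,\,25$, and is never zero, so $M$ must be a divisor of one of these five numbers. Inspecting their divisor lists, the only values that are simultaneously at least $7$ and congruent to $1\pmod 6$ are $M=7$ (dividing $63$) and $M=25$ (dividing $25$), corresponding to $\alpha=1$ and $\alpha=4$ respectively. Both contradict $\alpha>9$, so $m$ cannot exist. The only mildly delicate step is the mod-$M$ arithmetic; the rest is bookkeeping once Corollary~3 has been invoked for the parity bounds on $n$ and $m$.
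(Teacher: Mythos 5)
Your argument is correct. Up to the key congruence it coincides with the paper's proof: both apply Lemma~13 to $k=n$ and $k=m$, observe that $n$ and $m$ are distinct roots of the same quadratic $(6\alpha+1)k^2+(5-10\alpha)k+(4\alpha+6)$ modulo $p$, and extract the linear relation $(6\alpha+1)(n+m)\equiv 10\alpha-5\pmod p$ (the paper does this by subtracting and dividing by $n-m$ rather than by Vieta, which is the same computation). The endgame differs. The paper inverts $6\alpha+1$ modulo $p$ using $\alpha^2\equiv-\tfrac12$ to get $19(n+m)\equiv 40\alpha+25\pmod p$, notes both sides lie in $(0,p)$ so they are equal as integers, and contradicts this with $19(n+m)\le 38\alpha<40\alpha+25$ --- two lines. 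You instead lift the congruence to an integer identity with multiplier $k_1$, bound $k_1\in\{1,\dots,5\}$, and reduce modulo $M=6\alpha+1$ to force $M$ to divide one of $101,82,63,44,25$, which is incompatible with $M\equiv 1\pmod 6$ and $M\ge 61$. This is valid (I checked the arithmetic: $36p\equiv 38$, $36(10\alpha-5)\equiv-240$ mod $M$, and the divisor lists are right), but two remarks: first, your appeal to Corollary~3 for parity is unnecessary --- the trivial bounds $2\le n+m\le 2\alpha$ already give $k_1\in\{1,\dots,5\}$, since the numerator is positive and at most $12\alpha^2-8\alpha+5<6p$; second, your concluding filter should be stated as $M\ge 6\cdot 10+1=61$ rather than $M\ge 7$, though the conclusion is the same since the only admissible divisors are $7$ and $25$. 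The paper's size argument is shorter, but your modular finish is a reasonable alternative that avoids having to verify $40\alpha+25<p$.
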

\begin{proof}
Indeed, if $m$ existed, we would have two congruences
\[
2(3n-2)(n-1)\alpha+(n+2)(n+3)\equiv 0 \pmod p,
\]
\[
2(3m-2)(m-1)\alpha+(m+2)(m+3)\equiv 0 \pmod p.
\]
Subtracting and dividing by $n-m$, we get
\[
(6n+6m-10)\alpha+n+m+5\equiv 0 \pmod p,
\]
hence
\[
n+m\equiv \frac{-5+10\alpha}{6\alpha+1}\equiv \frac{40\alpha+25}{19} \pmod p,
\]
which is impossible. Indeed, in this case we would get
\[
19n+19m\equiv 40\alpha+25\pmod p,
\]
both numbers are positive and less than $p$. This is because the first $\alpha>9$ with prime value of $p=2\alpha^2+1$ is $\alpha=21$, hence $\alpha\geq 21$ and $p\geq 883$, therefore $40\alpha+25\leq 40\sqrt{p/2}+25<p$. So we must have $19n+19m=40\alpha+25$, which is not true, because the left-hand side is at most $38\alpha$.
\end{proof}

Now we are in position to finish the proof of S\'ark\"ozy's conjecture.
\begin{proof}[Proof of S\'ark\"ozy's conjecture]
First we assume that $\alpha>9$. Since $0\not\in A+B$, at least one of these sets does not contain zero. Without loss of generality, assume that $0\not\in B$. Then the $\alpha-$th elementary symmetric polynomial of $B$ is non-zero (it is a product of all elements of $b$). But $p_k(B)\neq 0$ implies $n\mid k$ by previous lemma and $e_{\alpha}(B)$ can be expressed as a polynomial in $p_k(B)$ of homogeneous degree $\alpha$, therefore we have $n\mid \alpha$. On the other hand, Lemma 13 gives
\[
2(3n-2)(n-1)\alpha+(n+2)(n+3)\equiv 0\pmod p.
\]
Multiplying both sides by $6\alpha-1$ and expanding the brackets, we obtain
\[
(40n+32)\alpha+25n-19n^2-18\equiv 0 \pmod p
\]
We have $\alpha^2<p/2$ and $n\leq \alpha$. The function
\[
(40n+32)\alpha+25n-19n^2-18
\]
increases in $n$ from $32\alpha-18$ to $21\alpha^2+57\alpha-18$. Since $n$ is even, this number is also even, so it takes form $2Mp=2M(2\alpha^2+1)$. On the other hand, for $\alpha\geq 21$ we have
\[
\frac{21\alpha^2+57\alpha-18}{2\alpha^2+1}=10.5+\frac{57\alpha-28.5}{2\alpha^2+1}<12.
\]
This means that $1\leq M\leq 5$. On the other hand, $\alpha\equiv 0\pmod 3$, thus
\[
n-n^2\equiv (40n+32)\alpha+25n-19n^2-18=2M(2\alpha^2+1)\equiv 2M \pmod 3.
\]
This means that either $M=3$ and $n\not\equiv 2\pmod 3$ or $M=2$ or $5$ and $n\equiv 2\pmod 3$. In either case,
\[
n\mid(40n+32)\alpha+25n-19n^2-4M\alpha^2=2M+18.
\]
This observation reduces Theorem 3 to a finite number of cases.
If $M=2$, then we get $n\mid 22$, $n$ is even and $n\equiv 2\pmod 3$, so $n=2$ and
\[
112\alpha-44=(40n+32)\alpha+25n-19n^2-18=8\alpha^2+4,
\]
which has no integer solutions.

If $M=5$, then $n\mid 28$ and either $n=2$ or $n=14$. In the first case, we get
\[
112\alpha-44=20\alpha^2+10,
\]
which has no integer solutions. In the second case, the equation reads
\[
592\alpha-3392=20\alpha^2+10
\]
and we once again get no integer solutions, because the left-hand side is divisible by $4$, but the right-hand side is not.

Finally, if $M=3$, then $n\mid 24$ and $n$ can be $4,6,12$ or $24$ (we omit other divisors of $24$ because of the conditions $2\mid n$ and $n\not\equiv 2\pmod 3$). In this case, we have
\[
(40n+32)\alpha+25n-19n^2-18=12\alpha^2+6.
\]
Treating this as a quadratic equation in $\alpha$, we see that the discriminant \[D(n)=(40n+32)^2-4\cdot12\cdot(19n^2-25n+24)=688n^2+3760n-128\]
must be a square. Substituting our potential values of $n$, we see that 
\[
D(4)/16=1620, D(6)/16=2950, D(12)/16=9004, D(24)/16=30400,
\]
all of these values are non-squares. This concludes the proof for $\alpha>9$. The remaining values are $\alpha=3,6,9$, they correspond to $p=19,73,163$. It is easy to rule out $p=73$ and $p=163$: the polynomial
\[
2(3n-2)(n-1)\alpha+(n+2)(n+3)
\]
from Lemma 13 has no roots in $\mathbb F_{73}$ and its least positive root modulo $163$ is $61$, which is larger than $9=\alpha$. 

The only remaining case is $p=19$, for which $n=\alpha=3$ is one of the roots. Therefore, in this case we have $p_1(A)=p_1(B)=p_2(A)=p_2(B)=0$ and $p_3(A)=-p_3(B)=3S$ are non-zero. The generating polynomial of $A$ is then $x^3-S$ and for $B$ it is $x^3+S$, thus we have $B=-A$, which is a contradiction.

This finally concludes the proof.
\end{proof}

\section{Conclusion}

In this paper, we employed a version of Stepanov's polynomial method, developed by Hanson and Petridis, to prove that proper subgroups of $\mathbb F_p^*$ are not difference sets $(A-A)\setminus\{0\}$, except for the subgroups of orders $2$ and $6$. We also resolved S\'ark\"ozy's conjecture on quadratic residues: for all primes $p$ the set $\mathcal R_p$ of quadratic residues modulo $p$ cannot be expressed as $A+B=\mathcal R_p$ with $|A|,|B|>1$. Computations we encountered are related to congruences and non-congruences between binomial coefficients modulo $p$. This motivates several new problems.

\begin{problem}
Characterize sets $A\subset K$ in a given field $K$ satisfying
\[
\sum_{a'\in A\setminus\{a\}}\frac{1}{(a-a')^2}=\frac{1}{|A|}\left(\sum_{a'\in A\{a\}}\frac{1}{a-a'}\right)^2
\]
for all $a\in A$.
\end{problem}

Theorem 5 from Section 3 shows that if $A$ also satisfies the analogous cubic relation, then $A$ is either a singleton, or $A=\{C,C\pm D, C\pm D\sqrt{-1}\}$, or $|A|>\sqrt{\mathrm{char}\, K}>0$. This motivates the above question.
Note also that the conditions of Problem 1 are equivalent to $|A|$ polynomial equations in $|A|$ variables.

Further study of Hanson-Petridis polynomials, defined in Section 2, allows one to obtain a multiplicative property, satisfied by sets $A$ such that all non-zero differences of elements of $A$ are $d$-th roots of unity in $\mathbb F_p$ and $|A|(|A|-1)=d$.

\begin{problem}
Classify subsets $A\subset K$ with $|A|=\alpha>1$ such that
\[
\prod_{a'\in A\setminus\{a\}}(a-a')^\alpha=-1
\]
for all $a\in A$.
\end{problem}

The set $A=\{0,1,9,32,41\}$ in $\mathbb F_{41}$, described in Section 3, satisfies this property.

Direct comparison of coefficients of Hanson-Petridis polynomials with corresponding products (see Section 3) also motivates the following problem.

\begin{problem}
Let us call $p$ a Lev-Sonn prime, if $p=2\alpha(\alpha-1)+1$ and for some $1<n\leq \alpha$ the congruence
\[
{\alpha^2-1\choose n-1+\alpha}\equiv (-1)^{n-1}{\alpha^2-1 \choose \alpha} \pmod p
\]
holds. Are there infinitely many Lev-Sonn primes?
\end{problem}

A quick search among the first $586$  primes of the form $2\alpha(\alpha-1)+1$ (i.e. $\alpha\leq 3000$) produced only two such primes $p=13$ and $p=41$, corresponding to $(\alpha,n)=(3,3)$ and $(5,5)$.
\section{Acknowledgements}

This work is supported by the Ministry of Science and Higher Education of the Russian Federation (agreement no. 075-15-2022-265) and the Foundation for Advancement of Theoretical Physics and Mathematics BASIS.

The author is grateful to M.A. Korolev, S.V. Konyagin, I.D. Shkredov, P.A. Kucheriaviy, V.F. Lev, M. Rudnev and C. H. Yip for fruitful discussions and suggestions. The author is especially thankful to V.F. Lev for telling him about the Lev-Sonn conjecture.

 \bibliographystyle{amsplain}

\end{document}